\newtheorem{proposition}{Proposition} 
\newtheorem{lemma}{Lemma} 
\newenvironment{proof}{{\noindent\it Proof}\quad}{\hfill $\square$\par}
\newcommand{\change}[1]{\textcolor{black}{#1}}
\begin{document}

\title{The Mobile Production Vehicle Routing Problem: Using 3D Printing in Last Mile Distribution}

\author[label1]{Yu Wang}
\address[label1]{Department of Applied Mathematics, Xi'an Jiaotong-Liverpool University, Suzhou, Jiangsu 215123, China}

\ead{yu.wang1803@student.xjtlu.edu.cn}

\author[label2]{Stefan Ropke}
\address[label2]{DTU Management, Technical University of Denmark, Akademivej Building 358, 2800 Kgs. Lyngby, Denmark}
\ead{ropke@dtu.dk}

\author[label1]{Min Wen\corref{cor1}}

\ead{min.wen@xjtlu.edu.cn}

\cortext[cor1]{Corresponding authors}

\author[label3]{Simon Bergh}
\address[label3]{3D Printhuset A/S - Skudehavnsvej 17A, 2150 Nordhavn}
\ead{skb@3dprinthuset.dk}

\begin{frontmatter}

\begin{abstract}
We study a new variant of the vehicle routing problem, called the Mobile Production Vehicle Routing Problem (MoP-VRP). In this problem, vehicles are equipped with 3D printers, and production takes place on the way to the customer. The objective is to minimize the weighted cost incurred by travel and delay of service. We formulate a Mixed Integer Programming (MIP) model and develop an Adaptive Large Neighbourhood Search (ALNS) heuristic for this problem. To show the advantage of mobile production, we compare the problem with the Central Production Vehicle Routing Problem (CP-VRP), where production takes place in a central depot. We also propose an efficient ALNS for the CP-VRP. We generate benchmark instances based on Vehicle Routing Problem with Time Windows (VRPTW) benchmark instances, and realistic instances based on real-life data provided by the Danish Company 3D Printhuset. Overall, the proposed ALNS for both problems are efficient, and we solve instances up to 200 customers within a short computational time. We test different scenarios with varying numbers of machines in each vehicle, as well as different production time. The results show that these are the key factors that influence travel and delay costs. The key advantage of mobile production is flexibility: it can shorten the time span from the start of production to the delivery of products, and at the same time lower delivery costs. Moreover, long-term cost estimations show that this technology has low operation costs and thus is feasible in real life practice.
\end{abstract}

\begin{keyword}
Metaheuristics \sep Transportation \sep Mobile Production \sep Vehicle Routing Problem \sep 3D Printing  

\end{keyword}

\end{frontmatter}

\section{Introduction}

In the modern supply chain system, balancing the interests of manufacturers, distributors, and customers has always been a difficult problem. Manufactures and Distributors want to minimize their operation costs. Customers, meanwhile want to receive a good quality of service. Among the various costly factors behind the operations, inventory management is a long-time problem that has complicated the production and circulation of commodities. Annual reports at Amazon, for example, have listed the inventory problem as a risk factor within the operational process for two consecutive years (\citeauthor{amazon17}, \citeyear{amazon17}; \citeauthor{amazon18}, \citeyear{amazon18}). In 2017, Amazon reported that a 1 percent inventory increase would cost an additional 180 million USD (\citeauthor{amazon17}, \citeyear{amazon17}). With the rapid development of industrial manufacturing, an effective solution called make-to-order (MTO) was proposed to reduce such inventory costs. 

In MTO mode, when the central warehouse receives an order, production begins there directly. The number of goods to be produced is the same as the number of orders received (i.e., no excess good is made). When production is completed, trucks deliver the assigned goods to the customers, as shown in Figure \ref{ex_cp}. To minimize the total cost, the practitioners want to find the optimal production scheduling and transportation scheme. This need has raised an optimization problem, which we have named \textit{the Central Production-Vehicle Routing Problem (CP-VRP)}. 

Although the on-demand central production in MTO mode can eliminate inventory costs, the potential drawback is that it is not flexible enough because waiting for the completion of production may delay delivery to the customers. This disadvantage does not conform to customers' wish for a high-quality service, which may ultimately lead to a decline in both service quality and brand reputation. 

To improve and innovate the logistics model, Amazon applied for a patent (US Patent 9684919) in 2015 (\citeauthor{amazontech}, \citeyear{amazontech}; \citeauthor{Geek}, \citeyear{Geek}) that combines 3D printing technology and MTO mode. With this patent, a car-mounted 3D printer produces the products on the way to the customer, as shown in Figure \ref{ex_mop}. By applying this technique, production and transportation are synchronized and delivery times can be largely reduced.

\begin{figure}[htbp]
\centering
\begin{minipage}[t]{0.48\linewidth}
\includegraphics[width=0.8\textwidth]{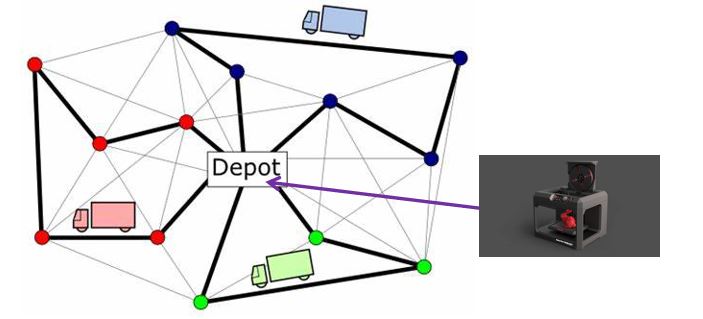}
\caption{The Central Production}
\label{ex_cp}
\end{minipage}
\begin{minipage}[t]{0.32\linewidth}
\includegraphics[width=0.8\textwidth]{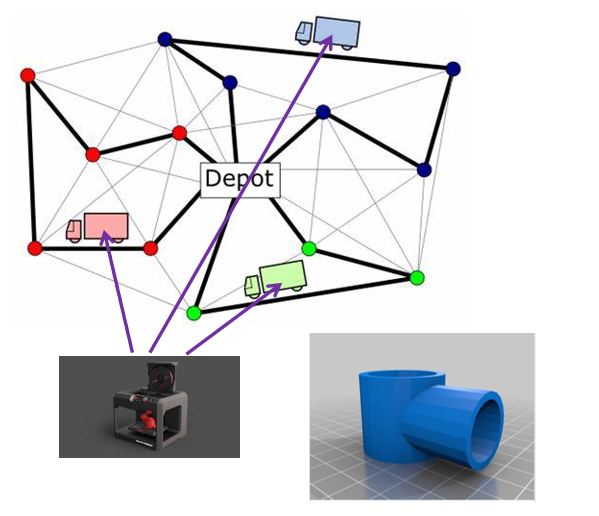}
\caption{The Mobile Production}
\label{ex_mop}
\end{minipage}
\end{figure}

This zero-inventory, synchronous production, and transportation technology can meet people’s demand for customization and prompt delivery, all while reducing the cost of logistics. Thus, applying this technology can be a good strategy to balance the interests of manufacturers, distributors, and customers in the modern supply chain system. With the further upgrading of industrial manufacturing and the continuous maturation of 3D printing technology, the era of intelligent manufacturing and personalized customization is coming. Thus, this new technology will have great potential for large-scale use in the future.

The integration of the car-mounted 3D-printing factory and the vehicle routing problem is denoted as the \textit{Mobile Production Vehicle Routing Problem (MoP-VRP)}. Unlike the CP-VRP, in which delivery starts only after production is completed at the depot, production and distribution in the MoP-VRP take place synchronously. This setting makes the MoP-VRP more complex than the CP-VRP, as production and delivery are more closely connected. This close connection also makes the existing methods proposed for the CP-VRP inapplicable to the MoP-VRP, thus we need to find a new suitable algorithm to solve the MoP-VRP. 

The proposed MoP-VRP considers a limited homogeneous vehicle fleet. A certain number of machines are installed in each vehicle. Production starts at the time the vehicle departs from the depot. Each customer has a strict starting time window and a soft ending time window. The vehicle should return to the depot before a certain time. \change{The objective is to minimize the weighted sum of travel and delay cost, where the travel cost is based on the total travel length and the delay cost is based on the delayed time. }

In this paper, we introduce the MoP-VRP and analyse its properties. We present mixed integer programming models for the MoP-VRP and the CP-VRP. We develop an Adaptive Large Neighbourhood Search (ALNS) algorithm for both problems. By analysing the computational results for the two problems, we illustrate the advantages of mobile production.

Most of the MoP-VRP's applications will be in the future, when 3D printing technology has advanced to a state where popular consumer goods can be 3D printed on demand. However, an application that, in principle, could be rolled out at the time of writing is that of producing spare parts needed for repairs. Most of the machinery and equipment that we rely on in everyday life contains plastic parts that can break down. Examples of such equipment include everything from coffee machines, printers, and photocopiers to plumbing parts or even cars, buses, and planes. When a plastic part breaks down, the user may be unable to use the machine or equipment and have to wait for a repair. That repair may have to wait until spare parts are shipped from a distant warehouse, and for older equipment the spare parts may not even be available. If the spare part can be printed by a 3D printer mounted in a vehicle, the customer may get the spare part on the day she makes her request or even within hours if the MoP-VRP is solved in a dynamic setting (the present paper only considers the static MoP-VRP; the dynamic version is left for future work). 

\change{3D printers have already been used on board ships to print spare parts for repairs while at sea  (\citeauthor{brian}, \citeyear{brian}). One may be concerned about the quality of printing as the production environment is likely to be unstable both at sea and on the road. To solve this potential risk, \citeauthor{PHILLIPS2020100969} (\citeyear{PHILLIPS2020100969}) used passive stabilization to help on board 3D printers provide equally good quality products as the traditional land-based laboratory. Based on this, we find it likely that 3D printing on board a moving van is feasible using appropriate passive stabilization. It should be noted, that the technology that we envision to be used in a spare parts delivery scenario is fused filament fabrication (FFF), which from the beginning is more robust to movement compared to the stereolithography (SLA) technology studied by \citeauthor{PHILLIPS2020100969} (\citeyear{PHILLIPS2020100969})}.

The computational results in section \ref{sec:ResultsRealistic} show that the cost for each delivery can be kept at a reasonable level if a decent amount of daily customers can be attracted to the service. The biggest challenge, in rolling out the envisioned MoP-VRP spare-parts service may be in building a vast library of 3D models that correspond to possible spare parts and in obtaining the rights to print spare parts for many different manufacturers.

The contribution of this work is five-fold: 1) We introduce the MoP-VRP, a new, non-trivial variant of the vehicle routing problem; 2) we study properties of the MoP-VRP and CP-VRP that allows us to accelerate heuristics for the two problems; 3) we design and implement Adaptive Large Neighborhood heuristics for both problems; 4) we propose testing instances for the two problems including a set of benchmark instances and a set of realistic instances in cooperation with 3D Printhuset; 5) we perform an extensive computational experiment to show advantages of the two forms of production and routing.

The rest of the paper is organized as follows: a literature review on related works is presented in section 2. The problem setting is described in section 3; based on this, two integer programming models, necessary assumptions, definitions, and properties for the defined problem are presented. We introduce the ALNS algorithm in section 4 and present computational results along with findings in section 5. Finally, concluding remarks are summarized in section 6.

\section{Literature Review}

This section presents a review of 1) the Central Production Vehicle Routing Problem and 2) mobile production technology.

\subsection{The Central Production Vehicle Routing Problem}

In order to effectively use resources, the optimization of production scheduling and transportation schemes in the classical MTO model have been widely studied. Different works may deal with unique problem settings, and the name given to the solved problem can vary from work to work. In the review by \citeauthor{moons2017integrating} (\citeyear{moons2017integrating}), this name is generalised as \textit{the Production Scheduling and Vehicle Routing Problem (PS-VRP)}. The most commonly seen names in the literature are the integrated/joint production-distribution scheduling problem (\citeauthor{belo2015adaptive}, \citeyear{belo2015adaptive}; \citeauthor{guo2017harmony}, \citeyear{guo2017harmony}; \citeauthor{mohammadi2020integrated}, \citeyear{mohammadi2020integrated}; \citeauthor{yaugmur2020memetic}, \citeyear{yaugmur2020memetic}) and the integrated production scheduling and vehicle routing problem (\citeauthor{chen2005integrated}, \citeyear{chen2005integrated}; \citeauthor{zou2018coordinated}, \citeyear{zou2018coordinated}). In this work, we name it as \textit{the Central Production Vehicle Routing Problem (CP-VRP)}.

With respect to optimization strategy, many enterprises apply a decomposition method in practice to consider production scheduling and transportation schemes independently and optimize them separately (\citeauthor{scholz2011approach}, \citeyear{scholz2011approach}; \citeauthor{moons2017integrating}, \citeyear{moons2017integrating}). While the decomposition method reduces the difficulty of solving the problem, it is hard to obtain the global best solution due to the split of problem, and the obtained solution can be quite different from the optimal one (\citeauthor{moons2017integrating},\citeyear{moons2017integrating}). To use resources more efficiently and improve service quality, more and more companies are beginning to study the integrated optimization strategy, which is to combine central production and transportation decisions into a single optimization problem (\citeauthor{moons2017integrating}, \citeyear{moons2017integrating}; \citeauthor{chang2004machine}, \citeyear{chang2004machine}). 

In recent years, in-depth research has been conducted on the CP-VRP at various levels, and has theoretically proved that the zero-inventory MTO model can effectively help companies eliminate risks in inventory expenses and cash flow. In the literature, a large number of variants of the CP-VRP have been studied, and the corresponding mathematical models or theoretical derivations have been proposed (\citeauthor{amorim2013lot}, \citeyear{amorim2013lot}; \citeauthor{jamili2016bi}, \citeyear{jamili2016bi}). The variants of the problem can be classified according to the production mode and further classified as single machine production (\citeauthor{tamannaei2019mathematical}, \citeyear{tamannaei2019mathematical}; \citeauthor{jamili2016bi}, \citeyear{jamili2016bi}; \citeauthor{low2014coordination}, \citeyear{low2014coordination}; \citeauthor{li2016integrated}, \citeyear{li2016integrated}; \citeauthor{zou2018coordinated}, \citeyear{zou2018coordinated}), parallel machine configuration (\citeauthor{amorim2013lot}, \citeyear{amorim2013lot}; \citeauthor{dayarian2019branch}, \citeyear{dayarian2019branch}; \citeauthor{kesen2019integrated}, \citeyear{kesen2019integrated}; \citeauthor{belo2015adaptive}, \citeyear{belo2015adaptive}; \citeauthor{ullrich2013integrated}, \citeyear{ullrich2013integrated}; \citeauthor{guo2017harmony}, \citeyear{guo2017harmony}), production in the job shop (\citeauthor{mohammadi2020integrated}, \citeyear{mohammadi2020integrated}), and production in the flow shop (\citeauthor{yaugmur2020memetic}, \citeyear{yaugmur2020memetic}; \citeauthor{wang2019variable}, \citeyear{wang2019variable}). Some of these studies have added the classic time window constraints for vehicle routing problems (\citeauthor{amorim2013lot}, \citeyear{amorim2013lot}; \citeauthor{guo2017harmony}, \citeyear{guo2017harmony}; \citeauthor{mohammadi2020integrated}, \citeyear{mohammadi2020integrated}). The CP-VRP and MoP-VRP solved in this work are most relevant to the variant studied by \citeauthor{kesen2019integrated} (\citeyear{kesen2019integrated}), where parallel machine configuration and a limited homogeneous vehicle fleet are considered. We extend this model a little by adding the duration of routes.

The mathematical model and valid inequality for the CP-VRP have been widely studied in the literature. \citeauthor{chang2004machine} (\citeyear{chang2004machine}) used mathematical derivation to prove that the CP-VRP problem, with only one production machine, is an NP-hard problem. \citeauthor{geismar2008integrated} (\citeyear{geismar2008integrated}) proved that the CP-VRP problem is an NP problem and gave the lower bound of the solved problem by mathematical proof. \citeauthor{armstrong2008zero} (\citeyear{armstrong2008zero}) proved that the use of the basic properties of the tackled problem can accelerate the branch-and-bound search. \citeauthor{chen2005integrated} (\citeyear{chen2005integrated}) defined CP-VRP with different scenarios and gave mathematical proof of the complexity of some of the defined scenarios. 

The exact algorithm for CP-VRP has also been investigated in the literature. \citeauthor{amorim2013lot} (\citeyear{amorim2013lot}) verified the importance of introducing lot-sizing by solving the mixed integer programming model. \citeauthor{tamannaei2019mathematical} (\citeyear{tamannaei2019mathematical}) dealt with a problem with production due date and used the branch-and-bound algorithm to solve the problem. In this work, the 
``earliest due date first production" scheduling scheme is proposed and is proven to effectively accelerate the search for the lower bound. \citeauthor{dayarian2019branch} (\citeyear{dayarian2019branch}) designed an effective branch-and-cut-and-price algorithm and introduced new branching rules to efficiently obtain the lower bound of each branch.

Most of the literature uses heuristic or meta-heuristic algorithms to solve CP-VRP related problems. The most used are the neighbourhood search algorithm (\citeauthor{belo2015adaptive}, \citeyear{belo2015adaptive}; \citeauthor{jamili2016bi}, \citeyear{jamili2016bi}; \citeauthor{wang2019variable}, \citeyear{wang2019variable}), genetic algorithm (\citeauthor{tamannaei2019mathematical}, \citeyear{tamannaei2019mathematical}; \citeauthor{kesen2019integrated}, \citeyear{kesen2019integrated}; \citeauthor{yaugmur2020memetic}, \citeyear{yaugmur2020memetic}; \citeauthor{ullrich2013integrated}, \citeyear{ullrich2013integrated}; \citeauthor{low2014coordination}, \citeyear{low2014coordination}; \citeauthor{li2016integrated}, \citeyear{li2016integrated}; \citeauthor{guo2017harmony}, \citeyear{guo2017harmony};  \citeauthor{zou2018coordinated}, \citeyear{zou2018coordinated}), and decomposition method (\citeauthor{dayarian2019branch}, \citeyear{dayarian2019branch}; \citeauthor{ullrich2013integrated}, \citeyear{ullrich2013integrated}; \citeauthor{zou2018coordinated}, \citeyear{zou2018coordinated}). Among them, \citeauthor{jamili2016bi} (\citeyear{jamili2016bi}) and \citeauthor{li2016integrated} (\citeyear{li2016integrated}) used the Pareto-Optimal Solution to provide a trade-off between transportation costs and service quality. In our study, we apply the Adaptive Large Neighbourhood Search (ALNS) algorithm to solve both the MoP-VRP and CP-VRP. Moreover, we propose some strategies to accelerate the ALNS for the CP-VRP.

Another existing optimization problem groups together the production and routing is the \textit{Production Routing Problem (PRP)} (\citeauthor{adulyasak2015production}, \citeyear{adulyasak2015production}; \citeauthor{qiu2018variable}, \citeyear{qiu2018variable}), which is an integrated optimization problem that jointly optimizes production, inventory, and routing decisions. The PRP usually deals with a multiple-period planning horizon, and in each period the number of products to be made, the number of products to be stored in the depot (i.e, the inventory), and the delivery plan must be decided. Different from the PRP, Both the CP-VRP and the MoP-VRP in this work will not consider overproduction (i.e., no inventory cost), and we only deal with a one-day planning horizon. 

\subsection{The Mobile Production Technology}

To the best of our knowledge, only two works introduce the concept of mobile production technology in the literature; different works define a different kind of mobile production.

\citeauthor{malladi2020dynamic} (\citeyear{malladi2020dynamic}) introduced a mobile production mode in which production is conducted by a movable production unit called a “module”. The number of modules determines the capacity of production, and the target is to optimize the allocation of modules, production, and inventory planning to serve uncertain demand within a certain planning horizon. 

In \citeauthor{pasha2020optimization} (\citeyear{pasha2020optimization}), the mobile production is named the ``factory in box". First, trucks pick up raw materials from the supplier and semi-produced products from the factory. Then, production takes place at the factory or customer's location. \change{Although the vehicle carries the ``factory" and moves during the delivery process, production still takes place at a fixed point, not en route.} The mobility of production in this work is reflected in the flexible location of the factory. 

The problem studied by \citeauthor{malladi2020dynamic} (\citeyear{malladi2020dynamic}) and \citeauthor{pasha2020optimization} (\citeyear{pasha2020optimization}) differs from the proposed MoP-VRP. Production in these two works takes place at a stationary point, whereas production in the MoP-VRP takes place en route.

\section{Problem Description and Mathematical Model}

This section will describe the setting of the MoP-VRP and the CP-VRP. Then, the mathematical model for both problems will be given. After that, some properties of the MoP-VRP and CP-VRP, as well as corresponding proof are provided.

\subsection{Problem Description}

This section describes the setting of the MoP-VRP and the CP-VRP. We consider both problems within the context of urban delivery. 

Both the MoP-VRP and CP-VRP are defined on a graph as follows: $G = (V, E)$ with $V = \{0\} \cup C$ as the set of nodes and $E$ as the set of edges defined between connected nodes. Node 0 is the depot and \change{$C = \{1, \ldots, n\}$} is the set of customers. There exists a homogeneous set of vehicles \change{$K = \{1,\ldots,\kappa\}$}, each with capacity $Q$. We consider a fixed number of vehicles in this work and multiple trips are not allowed. The vehicle will start from the depot and visit a sequence of customers, then go back to the depot no later than a certain time. \change{The vehicle can depart at time 0 and should go back to the depot no later than $D$. } Each customer $i \in C$ has demand $d_i$ and production time $p_i$. Each customer also has a pre-specified time window [$a_i$, $b_i$]. We consider a strict starting time window and a soft ending time window for both problems. The service time for each customer $i$ is denoted as $e_i$. The distance from $i$ to $j$ is denoted as $c_{ij}$, and the traveling time from $i$ to $j$ is denoted as $t_{ij}$. We denote $M = \{1,\ldots,m\}$ as the set of machines on each vehicle that produce the goods for the customers. \change{For the CP-VRP, we define $M' = \{1,\ldots,m'\}$ as the set of machines in the depot. In this work, we assume that each vehicle has the same number of machines, so $m' = m*\kappa$. }\change{In practice, there is a setup/clean up time associated with each job, but this can be included in the production time and is therefore not modelled explicitly.}

For the MoP-VRP, a number of machines are installed in each vehicle. Production and delivery start simultaneously at the beginning of the planning horizon. The service start time at each location is the largest value among the arrival time, the production completion time, and the starting time window for that location. \change{With the current technology, there is a need for human intervention every time a new printing job is started. This means that the driver will have to stop the vehicle and spend a minute or two starting the new job, which could be problematic while driving on the highway. This is an issue that is not modelled currently, but is left for future work.} For the CP-VRP, the machines are installed in the depot, and each vehicle leaves the depot only when all the products assigned to the vehicle are ready. 

The aim for both problems is to determine an optimal integrated production and delivery schedule such that the weighted sum of the total travel distance and the total delay is minimized. 


\begin{figure}[h]
\centering
\includegraphics[width=0.8\textwidth]{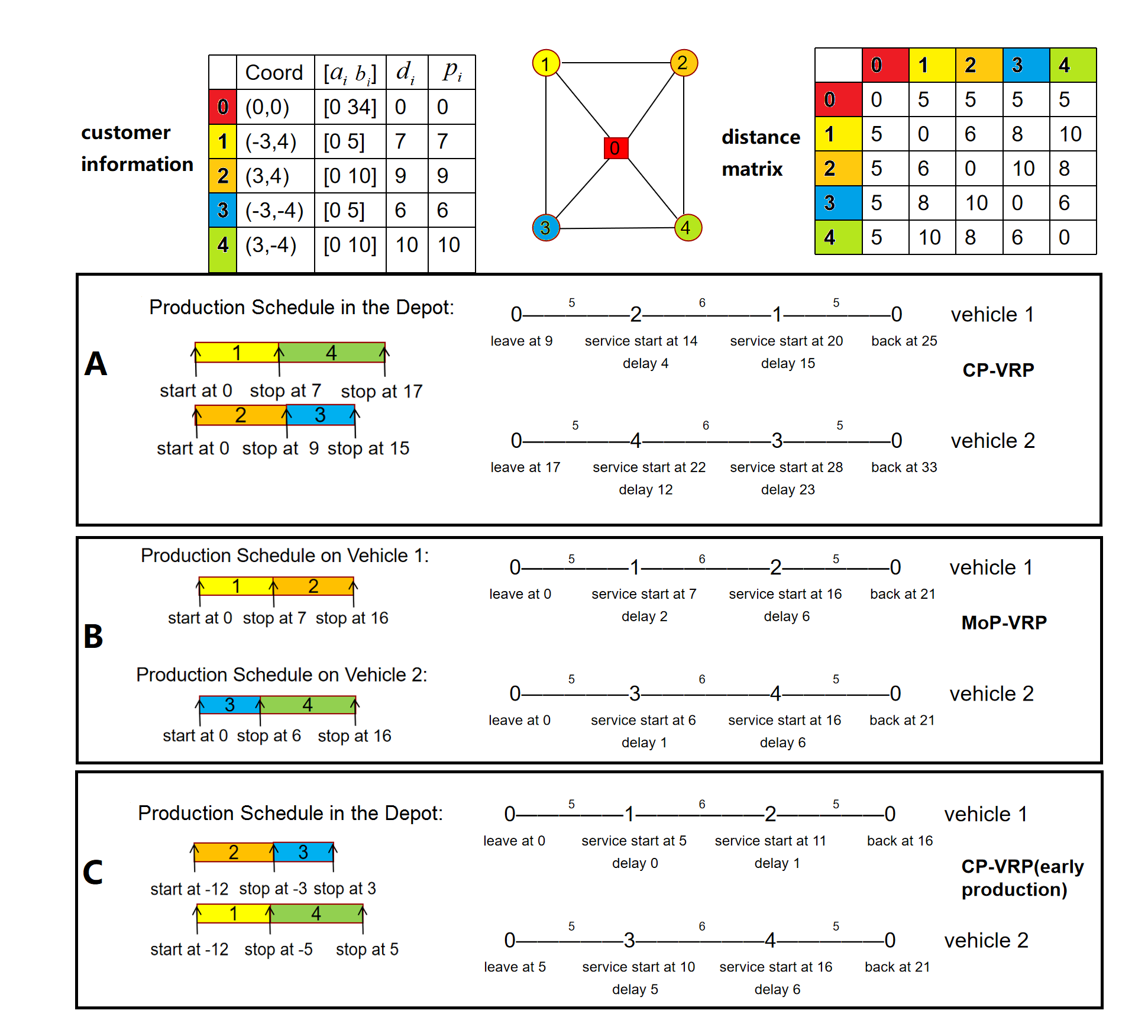}
\caption{An Example to compare the MoP-VRP and the CP-VRP (Coord: the coordinate; [$a_i,b_i$]: the time window; $d_i$: the demand; $p_i$: the production time)\label{toy}}
\end{figure}

Figure \ref{toy} shows an example of an MoP-VRP and a CP-VRP solution. In the example, we have two machines in total and two available vehicles. In the CP-VRP, the two machines are placed in the depot; in the MoP-VRP we have one machine on each vehicle. Each vehicle has capacity 16 and the latest time to return the depot is 34. The demand, production time, time windows, and coordinates for each customer are shown in Figure \ref{toy}. The service time for each customer is 0. The graph and the distance matrix are also shown at the top of the figure. \change{The weight for both the travel cost and delay cost is set as 1.}

Clusters A and B in Figure \ref{toy} show the optimal solution for the CP-VRP and the MoP-VRP, where production starts at 0 in both problems. In the MoP-VRP, the vehicles leave the depot at 0 and production takes place on the way to the customer. However, in the CP-VRP solution, production takes place at the depot, vehicles have to wait until the production is finished before they can leave the depot. It is found that in the CP-VRP vehicle 1 will leave the depot as soon as the assigned products (1, 2) are complete, while vehicle 2 needs to wait until product 4 is made. The optimal solution for both problems has the same travel distance of 32 (5 + 6 + 5 + 5 + 6 + 5), the MoP-VRP just has a delay of 15 (2 + 6 + 1 + 6), and the CP-MoP has a delay of 54 (4 + 15 + 12 + 23). Therefore, as we set the weight for both terms to 1, the objective value for the MoP-VRP is 47 (32 + 15), which is much lower than that for the CP-VRP (86 = 32 + 54).

One way to reduce the delay to the customer could be to allow the machines at the depot to start production at an earlier time\change{, say $H$ time units}. Therefore, we introduce early production to the CP-VRP to help reduce delay costs. \change{Note that the vehicle should depart the depot no earlier than 0, even if all the assigned products are finished before 0.}

The cluster C in Figure \ref{toy} shows the optimal solution for the CP-VRP, with early production considered, in which production begins 12 time units before 0 \change{(i.e., $H$ = 12)}. It is found that the total delay can be largely reduced to 12 (0 + 1 + 5 + 6) and is lower than that in the optimal solution for the MoP-VRP. However, the drawback of early production is that: the entire operation now spans a longer time period. We return to the consequences of this drawback in Section 5.

\subsection{Mathematical Model}

In this section, we present mixed integer programming (MIP) models for both the MoP-VRP and the CP-VRP.

In addition to the parameters introduced in Section 3.1, we define $\delta^+(i)$ and $\delta^-(i)$ as the set of nodes that can be reached from $i\in V$ and the set of nodes that can reach $i\in V$, respectively. We define a dummy start node $o$ and dummy end node $d$ at the depot 0. 

The binary variable $x^k_{ij}$ equals 1 if vehicle $k \in K$ travels from $i$ to $j$, and 0 otherwise. Variable $y_i$ denotes the delay time for customer $i \in C$. The non-negative variable $s^k_i$ is the time vehicle $k \in K$ which starts delivery at \change{node $i \in V$}. In the MoP-VRP model, the non-negative variable $v^k_{il}$ is the production starting time for \change{node $i \in V$} on machine $l \in M$ in vehicle $k$ (if $i$ is served by $k$) and the binary variable $w^k_{ijl}$ equals 1 if customer $j$'s order is produced immediately after customer $i$'s on machine $l \in M$ in vehicle $k \in K$.

The mathematical model for the MoP-VRP is set as:

\begin{align}
    	\text{min} \quad &\omega_1 \sum_{k \in K}\sum_{(i,j) \in E} c_{ij} x^k_{ij} + \omega_2 \sum_{i \in C}y_{i}　 \label{eq:obj}\\
	    \text{s.t.}\quad & \sum_{k \in K}\sum_{j\in\delta^{+}(i)}x^k_{ij} = 1 &\forall i \in C \label{eq:visit}\\
	    & \sum_{j \in \delta^{+}(o)}x^k_{oj} = \sum_{j \in \delta^{-}(d)}x^k_{jd} = 1 &\forall k \in K \label{eq:depot}\\ 
	    &\sum_{j \in \delta^{+}(i) }x^k_{ij} = \sum_{j \in \delta^{-}(i)}x^k_{ji} &\forall i \in C, k \in K \label{eq:flowconserv}\\
	    &\sum_{i \in C}\sum_{j \in \delta^{+}(i) }d_i x^k_{ij} \leq Q &\forall k \in K \label{Capcity_mop}
	    \\
	    &\sum_{l \in M}\sum_{j \in \delta^{+}(i)}w^k_{ijl} = \sum_{j \in \delta^{+}(i)}x^k_{ij} &\forall i \in C, k \in K \label{eq:linkxw}\\
	    &\sum_{j \in \delta^{+}(0)}w^k_{0jl} = 1 &\forall l \in M, k \in K \label{eq:startendforprod}\\
	    &\sum_{j \in \delta^{+}(i) }w^k_{ijl} = \sum_{j \in \delta^{-}(i)}w^k_{jil} &\forall i \in C , l \in M, k \in K \label{eq:flowconservforprod}\\
	    &v^{k}_{jl} \geq v^{k}_{il} + p_i - \tilde{M}(1-w^k_{ijl}) &\forall (i,j) \in E, l \in M, k\in K \label{eq:productstart}\\
	    &s^{k}_i \geq v^{k}_{il} + p_i &\forall i \in C, l \in M, k \in K \label{eq:prodserv}\\
	    &s^k_j \geq s^k_i + t_{ij} + e_i - \tilde{M}(1-x^k_{ij}) &\forall (i,j) \in E, k \in K \label{eq:travel}\\
	    &s^k_i \geq a_i &\forall i \in C, k \in K \label{eq:startTW_MoPVRP}\\
	    &s^{k}_{d} \leq D &\forall k \in K \label{eq:maxdur}\\
	    &y_{i} \geq s^{k}_{i} - b_{i} &\forall i \in C, k \in K \label{eq:delay}\\
	    &x^k_{ij}, w^k_{ijl} \in \{0,1\} &\forall (i,j) \in E, l\in M, k \in K\label{eq:xw}\\
	    &y_{i} \geq 0 &\forall i \in C\label{eq:y}\\
	    &v^{k}_{il},s^k_i \geq 0 &\forall i \in V, l \in M, k \in K\label{eq:vs}
\end{align}

\change{The objective \eqref{eq:obj} is to minimize the weighted sum of travel and delay cost, where travel cost is based on the total travel length and the delay cost is based on the delayed time. In this work, we set the weight for both terms as 1.} Constraints \eqref{eq:visit} ensure that each customer is visited exactly once by one vehicle. Constraints \eqref{eq:depot} guarantee that each vehicle starts and ends at its starting and ending depots. Constraints \eqref{eq:flowconserv} are the flow conservation constraints. Constraints \eqref{Capcity_mop} ensure that the capacity is not exceeded for each vehicle. Constraints \eqref{eq:linkxw} make sure that if a customer $i$ is visited by vehicle $k$, then this order should be produced on one of the machines on $k$. Constraints \eqref{eq:startendforprod}-\eqref{eq:flowconservforprod} are the production flow conservation constraints for each machine. Constraints \eqref{eq:productstart} determine the start production time for each order. If order $j$ is produced immediately after order $i$ by machine $l$ in vehicle $k$ (i.e., $x^k_{ij}=1$), then the start production time of order $j$ should be greater than or equal to the start production time of order $i$ plus the production time of $i$, which is $v^{k}_{jl} \geq v^{k}_{il} + p_i $. Constraints \eqref{eq:prodserv} and \eqref{eq:travel} ensure that the delivery can start only if the vehicle has travelled to the customer and the production of that order is finished. \change{We suggest that $\tilde{M}$ should be a value no smaller than $D+\max\{\max_{i \in C}\{p_i \},\max_{(i,j) \in E}\{t_{ij}+e_i\}\} $.} Constraints \eqref{eq:maxdur} set the maximum travel duration for each vehicle. Constraints \eqref{eq:delay} determine the delay time for each customer. Constraints \eqref{eq:xw}-\eqref{eq:vs} are the non-negativity constraints.

In the CP-VRP, $w^k_{ijl}$ and $v^k_{il}$ changes to $w_{ijl}$ and $v_{il}$ because production does not take place on the vehicle. $v_{il}$ can have a negative value because early production is allowed in this problem.

\begin{align}
    	\text{min} \quad &\omega_1 \sum_{k \in K}\sum_{(i,j) \in E} c_{ij} x^k_{ij} + \omega_2 \sum_{i \in C}y_{i}　 \label{eq:obj_cprp}\\
	   \text{s.t.}\quad  & \eqref{eq:visit}-\eqref{Capcity_mop} \notag
	    \\
	    &\sum_{l \in M'}\sum_{j \in C}w_{ijl} = 1 &\forall i \in C \label{eq:w_cprp}\\
	    &\sum_{j \in \delta^{+}(0)}w_{0jl} = 1 &\forall l \in M \label{eq:startforprod_cprp}\\
	    &\sum_{j \in \delta^{+}(i) }w_{ijl} = \sum_{j \in \delta^{-}(i)}w_{jil} &\forall i \in C , l \in M' \label{eq:flowconservforprod_cprp}\\
	    &v_{jl} \geq v_{il} + p_i - \tilde{M}(1-w_{ijl}) &\forall (i,j) \in E, l\in M' \label{eq:productstart_cprp}\\
	    &s^{k}_{o} \geq v_{il} + p_i - \tilde{M}(1-\sum_{j \in \delta^{+}(i)}x^k_{ij}) &\forall i \in C, l \in M', k \in K \label{eq:prodserv_cprp}\\
	    &v_{il} \geq -H &\forall i \in C, l \in M' \label{AheadProd}\\
	    & \eqref{eq:travel}-\eqref{eq:vs} \notag
\end{align}

In the CP-VRP, constraints \eqref{eq:startforprod_cprp}-\eqref{eq:productstart_cprp} should be interpreted like \eqref{eq:startendforprod}-\eqref{eq:productstart} in the MoP-VRP. Constraints \eqref{eq:w_cprp} ensure that each customer's demand should be produced by some machine. Constraints \eqref{eq:prodserv_cprp} guarantee that each vehicle can only start when all the assigned products are finished. Constraints \eqref{AheadProd} set the early production.

\subsection{Properties of the MoP-VRP and CP-VRP}

It is easy to see that MoP-VRP is an NP-Hard problem: we consider an instance for the MoP-VRP, where the production time for each customer is 0. It is sufficient to show that to solve this special instance of an MoP-VRP is equivalent to solving a VRP.

Next we turn to a simple property of the MoP-VRP that allows us to discard a part of the feasible solution space when designing heuristics for the problem. To explain this property, it is convenient
to start with a small example for an MoP-VRP with two machines per vehicle. Consider the vehicle route {[}1 2 3 4 5 6{]}, where each element is the ID of a customer. A possible machine schedule for this route
is Machine 1: {[}1 2 5{]} and Machine 2: {[}4 3 6{]}, where the elements of the vectors indicate which customers the machine is producing for. We propose that the schedule for machine 1 is \emph{in line with } the route since the machine completes jobs in the same order as deliveries are being made to the customers. Machine 2's schedule, meanwhile, is not \emph{in line with }the route because the jobs for customer 4 and 3 are produced in the opposite order of their appearance in the route. 

It is obvious that changing the order of customers 3 and 4 in the schedule for machine 2 can never make the solution worse: when product 4 is produced before 3, neither of the two customers can be served before
the production of both orders is finished. When product 3 is produced before 4, customer 3 can be served once product 3 has been finished while customer 4 cannot be served before both products are done; this implies that with production schedule [3 4 6], the vehicle will serve customer 3 and 4 (or the customers thereafter) at the same time or earlier than production schedule [4 3 6]. This is formalized into the following proposition:

\begin{proposition}
There exists an optimal solution for the MoP-VRP, such that the schedule of each machine is in line with the route of the truck carrying the machine.
\end{proposition}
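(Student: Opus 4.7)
The plan is to prove the proposition by a local exchange (bubble-sort) argument. Starting from any optimal solution, I would repeatedly locate a machine whose production schedule is not in line with its vehicle's route, swap two consecutively-produced jobs that appear in the opposite order in the route, and show that the objective can only weakly decrease. The number of out-of-line pairs, summed over all machines, is a nonnegative integer that strictly decreases with every such swap, so the procedure terminates in a feasible solution whose objective is no larger than the original optimum and in which every machine's production order agrees with its vehicle's route.

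The heart of the proof is a swap lemma. Take two customers $i$ and $i'$ produced consecutively on machine $l$ of vehicle $k$, with $i$ before $i'$ in production but $i'$ before $i$ in the route, and let $v$ denote the production start time $v^k_{il}$ of $i$. The original ready times are $v+p_i$ for $i$ and $v+p_i+p_{i'}$ for $i'$; after the swap they become $v+p_{i'}$ for $i'$ and $v+p_{i'}+p_i$ for $i$. The production start time of the next job on the same machine is unchanged (both equal $v+p_i+p_{i'}$), and the route itself is untouched, so I only need to compare the service-start times $s^k_j$ along the route segment from $i'$ onward, via \eqref{eq:travel}, \eqref{eq:prodserv}, and \eqref{eq:startTW_MoPVRP}.

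At $i'$ the new readiness is no larger than the old, so $s^k_{i'}$ weakly decreases; by the travel recursion \eqref{eq:travel} this propagates to every intermediate node between $i'$ and $i$. At $i$ the new production bound $v+p_{i'}+p_i$ is harder to dismiss, but in the original solution the readiness bound at $i'$ already forced $s^k_{i'} \geq v+p_i+p_{i'}$, and the nonnegative travel plus service terms in \eqref{eq:travel} then force the original $s^k_i$ to be at least $v+p_i+p_{i'} = v+p_{i'}+p_i$. Hence the new $s^k_i$ is still dominated by the old one, and the same weak inequality propagates through the rest of the route. Consequently the delays $y_j$ defined in \eqref{eq:delay} do not increase, the travel term in \eqref{eq:obj} is unchanged, and the duration constraint \eqref{eq:maxdur} remains satisfied.

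The anticipated obstacle is bookkeeping rather than a conceptual leap: one must verify that the swap preserves the production-flow constraints \eqref{eq:linkxw}--\eqref{eq:flowconservforprod}, that \eqref{eq:productstart} still holds between the swapped pair and their neighbours on the machine, and that the pointwise inequality on $s^k$ really does propagate correctly along the segment of the route strictly between $i'$ and $i$. With the swap lemma in hand, the potential-function argument completes the proof: any optimal solution can be transformed, without sacrificing optimality, into one in which every machine's production schedule is in line with its vehicle's route.
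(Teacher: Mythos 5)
Your proposal is correct and rests on the same key observation as the paper's proof: swapping two jobs that are produced out of route order cannot increase any service-start time, because the later-visited customer could not be served before both products were finished anyway. The paper phrases this as a one-swap contradiction against a supposedly unique optimum, whereas you make the iteration explicit via the inversion-count potential function; your version is somewhat more complete, but the underlying exchange argument is the same.
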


\begin{proof}
Suppose that \change{$s^{*}$}  is the unique optimal solution, where customer 2 is served after customer 1, but his product is produced before customer 1 on the same machine, and another solution \change{$\hat{s}$} where the sequence of production and delivery are the same for two customers. We define \change{$f(s)$} as the objective value for solution \change{$s$}. As the route is fixed, the service for customer 1 in \change{$\hat{s}$} will start no later than that in \change{$s^{*}$}, and the service starting time for customer 2 is the same in both solutions. Therefore, \change{$f(\hat{s}) \leq f(s^{*})$ }; thus, \change{$s^{*}$} is not the unique optimal solution. This is contradictory to the assumption.
\end{proof}
\medskip{}
The following proposition states that the CP-VRP can be decomposed into a production planning problem and a routing problem when the CP-VRP only involves a single vehicle

\begin{proposition}
When there is one vehicle and $m$ machines in the CP-VRP, the optimal solution can be found by first minimizing the production completion time and then solving a TSP. 
\end{proposition}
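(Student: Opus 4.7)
The plan is to exploit the structural decoupling available in the single-vehicle CP-VRP: the production schedule influences the routing objective only through a single scalar, the vehicle's departure time $T$ from the depot. Writing objective \eqref{eq:obj_cprp} as $\omega_1 \sum_{(i,j)\in E} c_{ij} x_{ij} + \omega_2 \sum_{i \in C} y_i$, the first term depends only on the chosen route $R$, while the second depends on the service start times $s_i$, which are determined by $R$ together with $T$. Constraints \eqref{eq:prodserv_cprp} together with the non-negativity of $s_o$ force $T = \max\{0, \max_{i \in C, l \in M'}(v_{il} + p_i)\}$, so any production schedule attaining the smallest feasible value of $\max_{i,l}(v_{il} + p_i)$ subject to $v_{il} \geq -H$ yields the smallest feasible $T$.

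The crux would be a monotonicity lemma: for any fixed route $R = (0, i_1, i_2, \ldots, i_n, 0)$, the service start time $s_{i_j}(T)$ is a weakly increasing function of $T$ for every $j$. I would prove it by induction along $R$. The base case $s_{i_1}(T) = \max\{T + t_{0,i_1}, a_{i_1}\}$ is manifestly non-decreasing in $T$, and the recursion $s_{i_{j+1}}(T) = \max\{s_{i_j}(T) + e_{i_j} + t_{i_j,i_{j+1}}, a_{i_{j+1}}\}$ preserves monotonicity because both $\max$ and addition do. Each delay $y_{i_j}(T) = \max\{0, s_{i_j}(T) - b_{i_j}\}$ is therefore weakly increasing in $T$, and hence so is $\sum_{i \in C} y_i$.

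With this lemma, the decomposition is immediate. Let $T^{*}$ be the minimum feasible departure time, obtained by solving the parallel-machine scheduling problem of minimizing the production completion time on the $m$ machines at the depot. For any route $R$ and any production schedule, the objective is bounded below by $\omega_1 \sum_{(i,j)\in E} c_{ij} x_{ij}(R) + \omega_2 \sum_{i \in C} y_i(R, T^{*})$, and equality is attained whenever production achieves $T^{*}$. The residual problem of choosing $R$ to minimize this bound, subject to the hard starting time windows $s_i \geq a_i$ and the maximum-duration constraint $s_d \leq D$, is a TSP variant with soft ending time windows. The main obstacle I anticipate is in the precise interpretation of ``TSP'' in the statement, since the residual routing problem is really a TSP with soft time windows rather than the classical one, and I would be careful to spell this out in the write-up. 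A minor subtlety is that $T^{*}$ may equal $0$ even when the production makespan can be pushed strictly below $0$ (because $T$ is clamped at $0$), but this does not weaken the argument since the monotonicity lemma only needs weak inequalities.
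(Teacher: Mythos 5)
Your proof is correct and follows essentially the same route as the paper's: the production schedule affects the routing only through the departure time, the route is unaffected by the production order, and an earlier departure never increases cost, so the two subproblems decouple. The paper states this in a single sentence, whereas you make the argument rigorous by isolating the departure time $T$ as the sole coupling variable and proving the monotonicity of service start times (and hence delays and feasibility) in $T$; your remark that the residual routing problem is a TSP with soft time windows rather than a classical TSP is a fair and worthwhile clarification of the statement.
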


\begin{proof}
As multiple trips are not allowed in this work and the vehicle cannot leave before the completion of all the assigned items, and since the production order does not influence the route of vehicle, the problem can be split in two as indicated in this proposition.
\end{proof}

We next investigate the structure of the schedule for a single machine in the CP-VRP. The goal of this analysis is to allow us to rule out certain machine schedules from the set of schedules to consider. This will allow us to design faster heuristics in section \ref{Sec:ALNS}. We note that the machine schedules influence the overall solution through the times when products are available. All products to be delivered
by a single route must be available before the vehicle can depart; poor machine scheduling will therefore lead to unnecessary delay costs.

The key insight is that if the schedule for a particular machine contains two or more products that are to be delivered on the same route, then these products can be produced in direct succession. For example, consider a solution where the production schedule for a machine in the CP-VRP is $[\begin{array}{cccccc}
1^{(2)} & 2^{(3)} & 3^{(4)} & 4^{(3)} & 5^{(4)} & 6^{(2)}\end{array}]$, where each element in the vector has the form $i^{(r)}$, in which $i$
is the ID of the customer who will receive the product and $r$ is the route that this customer is assigned to. Starting from the back of the schedule, we can gather the two products that are to be
delivered by route 2. This gives the schedule $[\begin{array}{cccccc}
2^{(3)} & 3^{(4)} & 4^{(3)} & 5^{(4)} & 1^{(2)} & 6^{(2)}\end{array}]$, which is at least as good as the starting schedule because the products for route 2 are finished at the same time as in the original schedule while the products for route 3 and 4 are finished earlier. We now continue backwards through the schedule to the next ``scattered'' products which are the products to be delivered on route 4. We gather
these to obtain the schedule $[\begin{array}{cccccc}
2^{(3)} & 4^{(3)} & 3^{(4)} & 5^{(4)} & 1^{(2)} & 6^{(2)}\end{array}]$. In this schedule, the products for route 2 and 4 are finished at the same time as in the previous schedule while the products to be
delivered by route 3 are finished earlier. We now have a schedule where products that are to be delivered by the same route are in direct succession. 

Lemma \ref{lemma:gather-jobs} below formalizes the process of gathering two products destined
for the same route. In this Lemma, we let $r(i)$ denote the route that serves customer $i$. The proof of the Lemma follows directly from the arguments above.

\begin{lemma}\label{lemma:gather-jobs}
Consider a production schedule 
$[\begin{array}{cccc} i_{1} & i_{2} & \ldots & i_{\theta}\end{array}]$, where $\theta$ is the length of the schedule and $i_{j}$ indicates the customer ID that the machine is producing for. If there exists $\alpha,\beta,\gamma$ such that $1\leq\alpha<\beta<\gamma\leq \theta$ and such that $r(i_{\alpha})=r(i_{\gamma})$ and $r(i_{\alpha})\neq r(i_{\beta})$, then job $i_{\alpha}$ can be
moved to position $\gamma-1$ and the jobs that previously occupied position $\alpha+1$ to $\gamma-1$ will now occupy position $\alpha$ to $\gamma-2$. This change will not delay departure for any routes
and may reduce the departure time for some routes.
\end{lemma}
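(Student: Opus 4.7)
The plan is to track, for every position in the schedule, how the move changes the completion time of the job occupying that position, and then to argue route by route that the latest completion time of that route's products on this machine does not increase. Since the departure time of a route equals the maximum completion time of its required products across \emph{all} machines, and the lemma modifies only one machine, weak monotonicity of each route's maximum on that single machine is sufficient to establish the claim.

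Let $p_i$ denote the production time of job $i$ and set $C_j = \sum_{h=1}^{j} p_{i_h}$ for the completion time of the job in position $j$ of the original schedule. I would first observe that, after moving $i_{\alpha}$ to position $\gamma-1$, positions $1,\ldots,\alpha-1$ and positions $\gamma,\ldots,\theta$ are unaffected, so their occupants finish at the same times as before. The jobs originally in positions $\alpha+1,\ldots,\gamma-1$ each shift one slot earlier, and because $p_{i_{\alpha}}$ no longer sits in front of them, each of their completion times decreases by exactly $p_{i_{\alpha}}$. The moved job $i_{\alpha}$ itself now completes at time $C_{\gamma-1}\le C_{\gamma}$. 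A brief algebraic check with partial sums confirms these three statements and constitutes the only routine calculation in the argument.

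For the route-level step, fix an arbitrary route $r'$ and consider only its jobs on this machine. If $r' = r(i_{\alpha}) = r(i_{\gamma})$, then $i_{\gamma}$ still finishes at time $C_{\gamma}$, the moved copy of $i_{\alpha}$ finishes at $C_{\gamma-1}\le C_{\gamma}$, and every other job of $r'$ on this machine either keeps its completion time or has it strictly decrease, so the maximum completion time for $r'$ on this machine is unchanged. If instead $r' \neq r(i_{\alpha})$, then every job of $r'$ on this machine either falls in an unaffected region or lies in the window $\{\alpha+1,\ldots,\gamma-1\}$ where completion times drop by $p_{i_{\alpha}}$, so the route's maximum on this machine can only weakly decrease. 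Combining the two cases with the fact that no other machine is altered gives the lemma.

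The main obstacle is purely notational: keeping track of how position indices shift when $i_{\alpha}$ is removed and reinserted, and verifying that the case $r' = r(i_{\alpha})$ correctly accommodates additional jobs of that route scattered elsewhere in the schedule. The underlying idea is simply that shifting a job later in a schedule can only speed up whatever lies in between, so no conceptual subtlety is anticipated.
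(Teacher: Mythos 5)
Your proposal is correct and follows essentially the same reasoning as the paper, which proves the lemma informally via the preceding example (the moved job's route still finishes no later because $i_{\gamma}$'s completion time $C_{\gamma}$ is unchanged and bounds $C_{\gamma-1}$, while jobs shifted forward finish $p_{i_{\alpha}}$ earlier); your partial-sum bookkeeping and explicit route-by-route case split simply formalize that argument. No gaps.
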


From Lemma \ref{lemma:gather-jobs} we derive the following proposition:

\begin{proposition}
There exists an optimal solution for the CP-VRP, where the production schedule for each machine finishes all jobs delivered by one route before switching to producing jobs that are to be delivered by a different
route. 
\end{proposition}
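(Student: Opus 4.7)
The plan is to start from any optimal solution $s^{*}$ and transform it, one machine at a time, into an equally optimal solution whose every machine schedule satisfies the grouping property. Because Lemma \ref{lemma:gather-jobs} guarantees that each move preserves or improves the objective, optimality is maintained throughout, so it is enough to describe a terminating procedure that applies the lemma until no violation of the grouping property remains.

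For a single machine with schedule $[i_{1}, \ldots, i_{\theta}]$, I would process routes from the back. Letting $r = r(i_{\theta})$ and $p_{1}<\cdots<p_{k}=\theta$ be the positions currently occupied by route-$r$ jobs, I would grow a route-$r$ block at the tail iteratively. If $p_{k-1}<p_{k}-1$, an intruder $\beta$ with $p_{k-1}<\beta<p_{k}$ and $r(i_{\beta})\neq r$ exists (by the definition of $p_{k-1}$ as the immediately preceding route-$r$ position), so Lemma \ref{lemma:gather-jobs} applies with $\alpha=p_{k-1}$ and $\gamma=p_{k}$, moving $i_{p_{k-1}}$ to position $p_{k}-1$. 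The route-$r$ jobs still at positions $p_{1},\ldots,p_{k-2}$ are unaffected, since the lemma only shifts jobs whose positions lie strictly between $\alpha$ and $\gamma$. Repeating with $\alpha=p_{k-2}$ and $\gamma=p_{k}-1$, and so on, gathers all $k$ route-$r$ jobs into positions $\theta-k+1,\ldots,\theta$ in at most $k-1$ applications of the lemma. An induction on the number of distinct routes using the machine, applied to the untouched prefix $[i_{1},\ldots,i_{\theta-k}]$, then extends the grouping property to the entire schedule, and running the argument independently on each machine concludes the proof.

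The main obstacle I anticipate is the bookkeeping needed to confirm, at each step, that a valid intruder exists and that the route-$r$ jobs required for the next iteration retain their positions. Both points reduce to the fact that Lemma \ref{lemma:gather-jobs} only reshuffles the strict interior of $(\alpha,\gamma)$: the tail block already formed (lying at positions $\geq \gamma$) and the route-$r$ jobs still pending in the prefix (lying at positions $<\alpha$) are left exactly where they were. Once this invariant is cleanly stated, the proposition is an immediate consequence of finitely many invocations of Lemma \ref{lemma:gather-jobs}.
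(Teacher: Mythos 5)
Your proposal is correct and follows essentially the same route as the paper's proof: starting from an arbitrary optimal solution and repeatedly applying Lemma \ref{lemma:gather-jobs} from the back of each machine's schedule to gather jobs route by route, with optimality preserved at every step. Your version is in fact more explicit than the paper's sketch about which $(\alpha,\beta,\gamma)$ to use and why the already-formed tail block and the pending prefix jobs are undisturbed, which is a welcome tightening rather than a departure.
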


\begin{proof} Consider any optimal solution to the CP-VRP instance. If the optimal solution does not satisfy the conditions of proposition 3, then we process each machine schedule that does not satisfy the conditions by repeatedly applying Lemma \ref{lemma:gather-jobs}. We go through the machine
schedule from the back and gather jobs destined for the same route by repeatedly applying Lemma \ref{lemma:gather-jobs}. After having applied Lemma \ref{lemma:gather-jobs} \change{$\eta$} times, we are sure that the \change{$\eta$} last jobs are grouped according to their delivery route and we therefore at most need to apply Lemma \ref{lemma:gather-jobs} as many times as there are jobs in the schedule. Applying Lemma \ref{lemma:gather-jobs} does not make the cost of the solution any worse so we will therefore
eventually end up with an optimal solution in which jobs are grouped according to their delivery route for each schedule. This shows that an optimal solution with the desired properties exists. 
\end{proof}

\if false
We introduce an iterated method that transforms the production schedule to show that the production schedule can have alternative form. In the first iteration, we fix the last customer in the production schedule (suppose this customer belongs to route $r$) and move all the products that are assigned to route $r$ to the end of the production schedule. In the next iteration, we fix the last customer in the production schedule before the cluster of route $r$ (suppose this customer belongs to route $r'$) and then we produce all the products belong to route $r'$ before the cluster of route $r$. We repeat this reschedule process until there is no other customer before the current cluster. We use an example to show the process:

Suppose the current production schedule is [2 3 4 3 4 2], where 2 3 4 are the route ID the customer is assigned to. 

In the first iteration, as the last customer belongs to route 2, we produce all the products belong to route 2 at the end and then the schedule becomes [3 4 3 4 2 2].

In the second iteration, the last customer before the cluster of route 2 belongs to route 4, so we transform the production schedule to [3 3 4 4 2 2].

Finally, as all the customers belong to route 3 has gathered and there is no other customer before the cluster of route 3. We complete the transformation of production schedule.

\begin{lemma}
The iterated method to rearrange the production schedule in an optimal solution will make the form of production schedule become $[\ldots C^*_i \quad C^*_j \ldots]$, where $C^*_i$ represents the cluster of customers that are assigned to the same route $i$. This form will not cause the increase of departure time for any routes compared to the original production schedule.
\end{lemma}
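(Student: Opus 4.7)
The plan is to establish this Lemma by induction on the number of completed iterations of the rearrangement procedure, using Lemma \ref{lemma:gather-jobs} as the atomic move. Each iteration should be shown to be realizable as a finite sequence of applications of Lemma \ref{lemma:gather-jobs}, so that two properties can be tracked simultaneously: the structural property that one more route-cluster is correctly formed at the tail of the schedule, and the quantitative property that no route's departure time has ever increased relative to the original.

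First I would make each iteration precise. Suppose the current schedule is $[i_1, i_2, \ldots, i_\theta]$ and a suffix $C^*_{r_{k+1}} \cdots C^*_{r_s}$ is already grouped into proper route-clusters (vacuously true initially). Let $\tau$ be the largest index not yet in this suffix, set $r_k = r(i_\tau)$, and consider every position $\alpha < \tau$ with $r(i_\alpha) = r_k$. If there is at least one position $\beta$ with $\alpha < \beta < \tau$ and $r(i_\beta) \ne r_k$, then the hypotheses of Lemma \ref{lemma:gather-jobs} hold with $\gamma = \tau$, and applying it moves the job at position $\alpha$ into position $\tau-1$ while leaving the relative order of the other jobs unchanged. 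I would then argue that by scanning $\alpha$ from left to right and repeatedly invoking Lemma \ref{lemma:gather-jobs}, all route-$r_k$ jobs below the suffix end up contiguous at positions $\tau'_{\min}, \ldots, \tau$, producing a new cluster $C^*_{r_k}$ that extends the already-grouped suffix. The inductive invariant to maintain is: (a) after $k$ iterations the tail of the schedule is $C^*_{r_{k+1}} \cdots C^*_{r_s}$ with each cluster containing exactly the customers of its route, and (b) no route's departure time exceeds that in the original schedule. Part (a) is immediate from the construction; part (b) follows because Lemma \ref{lemma:gather-jobs} is monotone in the right direction, and the composition of monotone updates is monotone. Termination is guaranteed, since each iteration strictly reduces the number of ungrouped positions by at least one.

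The main obstacle I anticipate is purely bookkeeping: when Lemma \ref{lemma:gather-jobs} is applied, positions to the right of $\alpha$ and strictly less than $\gamma$ shift by one, so the indices of subsequent applications must be re-read from the updated schedule rather than the original. I would handle this by phrasing the inner loop as ``while there still exists a scattered route-$r_k$ job strictly before position $\tau$, apply Lemma \ref{lemma:gather-jobs} to the leftmost such job,'' which makes the induction on the number of scattered jobs transparent and sidesteps any index arithmetic. Once this is cleanly set up, the two claims of the Lemma — the final clustered form and the non-increase of departure times — follow immediately from the inductive invariant at termination.
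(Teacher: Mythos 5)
Your proof is correct and follows essentially the same route as the paper: iterate from the back of the schedule, gathering the jobs of one route at a time via repeated application of Lemma \ref{lemma:gather-jobs}, and observe that since each atomic move never delays any route's departure, neither does their composition. The only difference is one of rigor --- you make the inductive invariant and the index bookkeeping explicit, whereas the paper's own argument for this lemma is a terse informal version of the same idea (and its proof of the related Proposition~3 uses exactly your repeated-application-of-Lemma-\ref{lemma:gather-jobs} scheme).
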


\begin{proof}
In the first iteration, as we fix the last customer in the production schedule, the production completion time for the route the last customer assigned to keeps the same. Then, to move other customers belong to the same route to the tail of production schedule may advance the production completion time for the other routes. Thus, the final form of production schedule will not lengthen the departure time for any routes compared to the original schedule.
\end{proof}

\begin{proposition}

There exists an optimal solution for the CP-VRP where the production schedule is the same as the form the iterated method obtained.

\end{proposition}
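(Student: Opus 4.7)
The plan is to take an arbitrary optimal solution of the CP-VRP, apply the iterated method machine-by-machine, and argue via the preceding lemma that the resulting solution is still optimal and has the desired clustered form. Let $s^{*}$ be any optimal solution and let $\hat{s}$ be the solution obtained from $s^{*}$ by leaving the routes and customer-to-vehicle assignments unchanged and replacing each machine's production schedule by the one produced by the iterated method applied to that machine.

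First I would observe that the routing decisions are identical in $s^{*}$ and $\hat{s}$, so the total travel cost is unchanged; only the delay cost needs to be controlled. By the preceding lemma, each machine's new schedule consists of contiguous clusters $C^{*}_{i}$ of jobs destined for the same route, and no route's production-completion time on that machine increases. Since the departure time of a route is the maximum over all machines of the completion time of that route's jobs, and each machine's contribution to that maximum is non-increasing, the departure time of every route in $\hat{s}$ is no larger than in $s^{*}$.

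Next I would propagate this monotonicity to the delay cost. For a fixed route $k$, once the departure time is set, the service start time at each customer is obtained by the standard forward recursion $s^{k}_{j}=\max\{a_{j},\, s^{k}_{i}+t_{ij}+e_{i}\}$, which is monotone non-decreasing in the departure time; the strict starting-window constraint $s^{k}_{i}\geq a_{i}$ is preserved by the outer $\max$, and the duration bound $s^{k}_{d}\leq D$ is preserved because times only move earlier. Hence every $s^{k}_{i}$ in $\hat{s}$ is no larger than in $s^{*}$, and since $y_{i}=\max\{0,\, s^{k}_{i}-b_{i}\}$ is itself monotone non-decreasing in $s^{k}_{i}$, the total delay cost in $\hat{s}$ is no larger than in $s^{*}$. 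Combined with the unchanged travel cost, the objective of $\hat{s}$ is at most that of $s^{*}$, and the optimality of $s^{*}$ forces $\hat{s}$ to be optimal as well while having the clustered form by construction.

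The main obstacle I expect is bookkeeping rather than mathematics: making precise that the lemma's conclusion, phrased for one machine at a time, aggregates correctly into a non-increase of the route-wide departure time when all machines are rearranged simultaneously. A secondary point worth care is the earliest-departure constraint that the vehicle may not leave the depot before time $0$; if the iterated method pulls some machine's completion time below $0$, the route's departure is simply clamped to $0$ and therefore still no later than the original departure time, which is exactly what the monotonicity argument requires.
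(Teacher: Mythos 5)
Your proof is correct and takes essentially the same approach as the paper: apply the iterated rearrangement, invoke the lemma to conclude that no route's departure time increases, and then use monotonicity of the delay cost in the departure time (with travel cost unchanged since routes are fixed) to conclude that the rearranged solution is still optimal. The additional detail you supply --- aggregating per-machine completion times via the max, the forward recursion for service start times, and the clamping of the departure at time $0$ --- only makes explicit what the paper's shorter argument leaves implicit.
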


\begin{proof}

From lemma 1, we know that this form of production schedule will not cause the increase of departure time for any routes, thus the delay cost by the final form will never be larger than that by the original production schedule. Therefore, if the original schedule leads to an optimal solution, the final form is also an alternative optimal solution.

\end{proof}
\fi

The following proposition compares the objective value of the MoP-VRP and the CP-VRP when there is exactly one vehicle available.
\begin{proposition}
In the case when there is one vehicle and $m$ machines available, the objective value of the optimal MoP-VRP solution is always better than or equal to the objective value of the optimal solution to the corresponding CP-VRP instance, assuming that the early production parameter $H$ is set to 0.
\end{proposition}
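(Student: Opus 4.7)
The plan is to take an optimal CP-VRP solution and explicitly construct a feasible MoP-VRP solution with objective value no larger than that of the CP-VRP solution. Since there is only one vehicle, the travel cost contribution depends solely on the chosen route, so the main work is to show that the delay contribution can only decrease.

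First I would fix an optimal CP-VRP solution, consisting of a route $\pi = (0, \pi_1, \ldots, \pi_n, 0)$, a machine-to-job assignment, and a per-machine production order. Let $C_i$ denote the production completion time of customer $i$ under this schedule (with $H=0$, production begins at time $0$ in both problems), and let $M = \max_{i \in C} C_i$ be the makespan. In the CP-VRP the vehicle must wait until $M$ before departing, so the service start times satisfy $s^{CP}_{\pi_1} = \max(a_{\pi_1}, M + t_{0,\pi_1})$ and the usual recursion $s^{CP}_{\pi_k} = \max(a_{\pi_k}, s^{CP}_{\pi_{k-1}} + e_{\pi_{k-1}} + t_{\pi_{k-1},\pi_k})$.

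Next I would transfer the same route and the same machine schedule to the MoP-VRP; this is admissible because with one vehicle the two problems have identical machine counts and the machine-to-customer assignment is just relabelled as onboard. Production completion times in the constructed MoP-VRP solution are exactly the $C_i$ from above, so service start times are $s^{MoP}_{\pi_k} = \max(a_{\pi_k}, s^{MoP}_{\pi_{k-1}} + e_{\pi_{k-1}} + t_{\pi_{k-1},\pi_k}, C_{\pi_k})$, with $s^{MoP}_{\pi_1} = \max(a_{\pi_1}, t_{0,\pi_1}, C_{\pi_1})$. I would then prove by induction on $k$ that $s^{MoP}_{\pi_k} \leq s^{CP}_{\pi_k}$: the base case follows from $C_{\pi_1} \leq M$ and $t_{0,\pi_1} \leq M + t_{0,\pi_1}$; for the inductive step, the only non-routine term is $C_{\pi_k}$, which is dominated by $s^{CP}_{\pi_k}$ because in the CP-VRP the vehicle does not depart before product $\pi_k$ is finished and then still travels to $\pi_k$.

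Since delay $y_i = \max(0, s_i - b_i)$ is non-decreasing in $s_i$, the per-customer delays in the constructed MoP-VRP solution are bounded by those of the CP-VRP optimum, so the delay cost is no larger; the travel cost is identical because the route is the same; feasibility of the maximum-duration constraint $s_d \leq D$ transfers from CP-VRP to MoP-VRP by the same monotonicity; and the capacity constraint is preserved since the route is unchanged. Hence the constructed MoP-VRP solution has objective value no larger than the CP-VRP optimum, and the optimal MoP-VRP value can only be smaller or equal. I expect the main obstacle to be formally verifying the inequality $C_{\pi_k} \leq s^{CP}_{\pi_k}$ cleanly and confirming that the CP-VRP machine schedule, when mounted on the vehicle, does not need to satisfy the in-line property of Proposition 1 (it need only be a feasible MoP-VRP solution, not an optimal one with that structural property).
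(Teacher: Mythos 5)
Your proposal is correct and follows essentially the same approach as the paper: copy the route and machine schedule of an optimal CP-VRP solution into the MoP-VRP, observe that production completion times are unchanged while the vehicle no longer waits at the depot, and conclude that every service start time (hence every delay) can only decrease while the travel cost is identical. Your explicit induction along the route, using $C_{\pi_k} \leq M \leq s^{CP}_{\pi_1} \leq s^{CP}_{\pi_k}$, simply makes rigorous the sentence the paper states informally, so no further comparison is needed.
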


\begin{proof}
If the route and production schedule from an optimal CP-VRP solution is copied to an MoP-VRP solution, the production completion time for the customers is the same for the two problems. Since the vehicle in the CP-VRP will start from the depot after the completion of all the products, the service starting time for each customer in the MoP-VRP cannot be later that in the CP-VRP. Thus, with the same solution, the MoP-VRP will never obtain a higher total cost than the CP-VRP because the travel cost is the same and the delay cost is at least as good as that in the CP-VRP.

\end{proof}

When the number of vehicles is greater than 1, the optimal solution to the CP-VRP can be better than the optimal solution to the MoP-VRP, even when the early production parameter $H$ is set to 0. To illustrate, consider 4 customers and a depot distributed on a line as shown on Figure \ref{fig:customersOnALine}. Numbers on top of the line indicates coordinates; letters below indicate customers. The depot is located at coordinate 0. Assume that travel times and distances are equal to the Euclidean distances, and that service times are 0. Further, assume that the production time for customers A and B is 20, while it is 1 for customers C and D, and that the ending time windows are 31 for customers A and D and 30 for customers B and C. 

\vspace*{-4mm}
\begin{figure}[htbp]
\begin{center}
\includegraphics[width=0.5\textwidth]{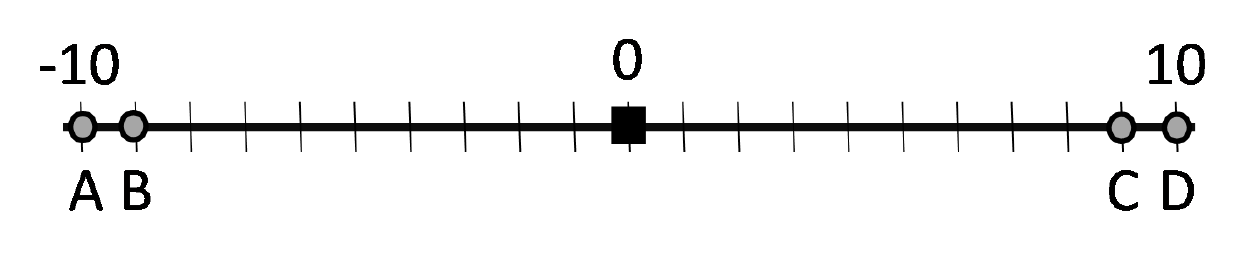}
\end{center}
\vspace*{-10mm} 
\caption{Example where the CP-VRP solution is better than the MoP-VRP solution.\label{fig:customersOnALine}}
\end{figure}
\vspace*{-4mm}
Assuming that we have two vehicles available, each equipped with 1 machine in the MoP-VRP, then an optimal solution is to place A and B on route 1 and C and D on route 2. The total traveled distance is 40, and there will be a delay of 9 time units for customer A (total objective: 49). For the CP-VRP we have two machines at the depot. These machines finish producing all products at time 21. Reusing the two routes from the MoP-VRP leads to zero delays in the CP-VRP case, so the total objective is 40.

\section{Adaptive Large Neighbourhood Search}\label{Sec:ALNS}

We developed an Adaptive Large Neighbourhood Search (ALNS) heuristic to solve the large-size problems efficiently. The ALNS (\citeauthor{ropke2006adaptive},\citeyear{ropke2006adaptive}) searches in large neighborhoods defined implicitly by a destroy method and a repair method. A destroy method disrupts part of the current solution while a repair method rebuilds the destroyed solution. In the ALNS, a set of destroy and repair methods are developed for selection. Each is assigned a weight that is adaptively adjusted based on their performance in previous iterations. In each iteration, a destroy and repair method is selected and applied to the current solution. The resultant solution is accepted based on a user-defined acceptance criterion, and the heuristic stops when the stopping criterion is met. 

The pseudo-code of our ALNS is presented in Alg.\ref{Alg:ALNS}. In line 2, an initial solution is constructed by a greedy insertion heuristic. In lines 6-8, we choose whether to use the noise or not based on the previous performance; after that we choose the destroy and repair operator. In the solution searching phase, we apply the threshold acceptance method proposed by \citeauthor{dueck1990threshold} (\citeyear{dueck1990threshold}) which has been shown as a good criterion for the ALNS (\citeauthor{santini2018comparison},\citeyear{santini2018comparison}). We always accept the better solution but also accept a deteriorated solution if it is within a given threshold $T$ of the best solution found so far. The parameter $T$ is initialized in the beginning of the search and decreased linearly to 0 at the end of the search. We will tune the initial threshold $T_{initial}$ in our final test to ensure the best performance of the algorithm. The ALNS stops when a certain number of predefined iterations, $N_{max}$, is reached.  

To enable the comparison between MoP-VRP and CP-VRP, we also apply ALNS to the CP-VRP. The pseudo-code of Alg.\ref{Alg:ALNS} also applies to the CP-VRP. 

\begin{algorithm}[h]
\begin{algorithmic}[1]
\State{\textbf{Function:} ALNS($instance$)}
\State{$s = \text{greedyinsertion}(instance)$ ;}
\State{$s^* = s$;}
\Repeat
\State{$s' = s$;}
\State{decide whether to use the noise or not;}
\State{select a destroy operator and update $s'$;}
\State{select a repair operator and update $s'$;}
\change{
\If{$\displaystyle\frac{f(s')-f(s^*)}{f(s^*)} < T $}
\State{$s = s'$;}
\If{$f(s') < f(s^*)$}
\State{$s^*=s'$;}
\EndIf
\EndIf
\If{$N_{max} \% 100 = 0$}
\State{Update weights for the destroy, repair operators and noise;}
\EndIf
\State{$T = T - (T_{initial}/N_{max})$;}}
\Until{$N_{max}$ is met}
\end{algorithmic}
\caption{Adaptive Large Neigbourhood Search} \label{Alg:ALNS}
\end{algorithm}

\subsection{The ALNS for the MoP-VRP}

This section will describe the strategy to construct the solution for the MoP-VRP and the removal and insertion operators used in the ALNS.

\subsubsection{Construction of Solution}

We used the parallel insertion heuristic to construct the initial solution. We start with a set of empty routes. In each iteration, we evaluate the cost of inserting each unassigned customer to each possible route position as well as each possible production position in the assigned route. We then identify the minimum insertion cost for each unassigned customer, select the one with the smallest minimum insertion cost, and insert it to the best position. The procedure is repeated until all the customers are included in the routes.  

Different from the traditional VRP, in the MoP-VRP, not only does the routing position need to be decided for each customer but also the machine production schedule for the corresponding customer demand. For each inserted routing position, one needs to examine all the possible production schedules for each machine. However, as proven in Proposition 1, there exists an optimal solution such that the production sequence on the same machine is consistent with the route sequence. Following Proposition 1, we only consider the production whose sequence is \textit{in line with} that of route in our implementation. This helps to reduce computational time significantly. The pseudo-code for the insertion strategy is shown in Alg. \ref{Alg:mop_best}. 

\begin{algorithm}[h]
\begin{algorithmic}[1]
\For{each non-inserted Customer $i$}
\For{each Route \change{$r$}}
\For{each position \change{$\rho$} $\in$ Route \change{$r$}}
\State{calculate travel increment on position \change{$\rho$}}
\For{each Machine \change{$l$} on Route \change{$r$}}
\State{Find the last customer whose delivery position in route \change{$r$} is before \change{$\rho$}}
\State{Compute the delay increment in route \change{$r$} for the customers after position \change{$\rho$}}
\State{\change{$(i^*, \rho^*, l^*, r^*) = (i, \rho, l, r)$} if the total cost increment is the least}
\EndFor
\EndFor
\EndFor
\EndFor
\State{\change{Return $(i^*, \rho^*, l^*, r^*)$}}
\end{algorithmic}
\caption{The insertion strategy for the MoP-VRP} \label{Alg:mop_best}
\end{algorithm}

We apply the worst-case analysis to calculate the computational complexity for Alg. \ref{Alg:mop_best}. It is found that the computational complexity is \change{$O(n^2m\frac{n}{\kappa})=O(\frac{mn^3}{\kappa})$}, where $n$ is the number of customers, $m$ is the number of machines in each vehicle, and \change{$\kappa$} is the number of vehicles in the instance. The $n^2$ comes from the truth that the loops in lines 1-3 result in $n^2$ repetitions, the $m$ comes from line 5, which represents the number of machines in each vehicle, and \change{$\frac{n}{\kappa}$} is the maximum number of repetitions resultant from line 7. Under the assumption that \change{$\kappa = O(n)$}, the computational complexity can be simplified to \change{$O(mn^2)$}.

\subsubsection{Destroy and Repair Operators}
In this work, we propose six destroy operators and four repair operators. The MoP-VRP and CP-VRP share the same destroy and repair operators introduced in this section.

Common to all destroy methods is that we first randomly pick a value \change{$\Phi \in [\lfloor \lambda_1 n \rfloor,\lfloor \lambda_2 n \rfloor]$} as the number of customers to remove. For the interval \change{$[\lfloor \lambda_1 n \rfloor,\lfloor \lambda_2 n \rfloor]$, $n$} is the total number of customers and \change{$\lambda_1$ and $\lambda_2$} are the minimum and maximum ratios, respectively. We will tune the lower bound and upper bound (i.e., \change{$\lambda_1$ and $\lambda_2$}) in our final test.

The random and worst removals use the same strategy as that introduced by \citeauthor{ropke2006adaptive} (\citeyear{ropke2006adaptive}). The \textbf{random removal} picks \change{$\Phi$} customers at random to remove. \change{The \textbf{worst removal} removes \change{$\Phi$} customers with the highest cost savings $\xi_{i}$. Inspired by the worst removal, \textbf{Worst-Delay Removal} removes $\Phi$ customers with the highest delay cost savings, and \textbf{Worst-Dist Removal} removes $\Phi$ customers with the highest travel distance savings. Inspired by the existing Shaw removal (\citeauthor{shaw1997new},\citeyear{shaw1997new}; \citeauthor{shaw1998using},\citeyear{shaw1998using}), we also propose two similar destroy operators, \textbf{Geo Removal} and \textbf{Demand Removal}, which first remove a random seed customer and then $\Phi-1$ customers that are close to the seed customer in terms of distance and demand, respectively. Details of the destroy operators are presented in the Appendix. }

For the repair operators, the proposed ALNS applies the \textbf{regret-k method}. The idea is to choose the insertion that we will regret most if we do not perform it. \change{Let $r_{ih}$ indicate the route where customer $i$ has the $h$th lowest insertion cost and $\Delta_{i,r_{ih}}$ be the inserting cost of customer $i$ on route $r_{ih}$. The regret value $g^{*}_{i} $ equals to $\sum^k_{h=1}\{\Delta_{i,r_{ih}} - \Delta_{i,r_{i1}}\}$. The regret-k method will choose to insert customer $i$ that maximizes: $ \text{arg max}\{g^{*}_{i} \}$ }and will put it into the position with the lowest cost increment. Ties are broken by selecting the request with the best insertion cost. In this work, we implement the regret-1/-2/-3/-4 method, regret-1 method is the greedy insertion introduced in section 5.1 due to the tie-breaking rule.

We also introduce noise to give randomness to the objective value every time we calculate the insertion cost of a request. We calculate the noise by applying the same strategy used by \citeauthor{ropke2006adaptive} (\citeyear{ropke2006adaptive}). 

Each removal and insertion heuristics is given a weight that can be adaptive adjusted to measure how well each operator performs on each instance. We also give the automatically adjusted weight to the noise so that the algorithm itself can decide whether to apply the noise or not in each iteration. A higher weight indicates a more effective operator, and an operator with a higher weight is more likely to be chosen in each iteration. The mechanism and parameters to adjust the weight are exactly the same as \citeauthor{ropke2006adaptive} (\citeyear{ropke2006adaptive}).

\subsection{The ALNS for the CP-VRP}

The ALNS for the CP-VRP shares the same framework as the ALNS for the MoP-VRP, and the destroy/repair operators for the MoP-VRP also apply to the CP-VRP. We introduce some new strategies to accelerate the ALNS for the CP-VRP.

\subsubsection{Construction of Solution}
We also apply the parallel insertion heuristic to construct the initial solution for the CP-VRP. We start with a set of empty routes. In each iteration, we evaluate the cost of inserting each unassigned customer to all the possible route positions and fix the route position with the smallest cost increment. We then identify the smallest total cost increment by trying all the possible positions in the production sequences at the depot. We select the customer with the smallest minimum insertion cost and add it to the route and machine schedule in the solution. The procedure is repeated until all the customers are included in the solution.

\subsubsection{The Potential Production Sequence for the CP-VRP}

When we identify the best production position for a customer whose routing position is already determined, we should examine all the possible positions in the production sequence for each machine to ensure that the total cost increment is the global minimum. However, testing all the possible positions can make the computation quite time-consuming. From Proposition 3, we know that grouping production for customers who are assigned to the same vehicle to a shared machine can lead to an optimal solution. We can use this property to eliminate the cost computation of a large number of unnecessary production positions, thereby reducing the computational time significantly. 

Then, we provide an example to show how we narrow the search for potential production positions. Suppose that a new customer has already been selected to be added to route $r$, and we now need to consider when the corresponding production should take place for each machine. Assume the machine currently produces products for five customers. Let's use a vector to indicate the route index of each customer in the corresponding production sequence. For example, [4 4 2 2 1] means that the first two customers for whom the machine produces are delivered to by vehicle 4, the next two customers by vehicle 2, and the last one by vehicle 1. Then, we may meet two cases:

\textbf{Case 1}: If the new customer is served by a vehicle that is not in the vector, e.g., $r = 3$, it would make sense to just try the positions marked with *: $[* \quad  4 \quad 4 \quad * \quad 2 \quad 2 \quad * \quad 1 \quad * ]$. There is no need to try positions like: $A = [4 \quad * \quad 4 \quad 2 \quad 2 \quad 1]$, as they can never lead to a better solution than this position: $B = [* \quad 4 \quad 4 \quad 2 \quad 2 \quad 1]$.

The reason is that for both $A$ and $B$, the ready time for routes 1, 2, 4 is the same. $A$, however, will lead to a higher ready time for route 3.

\textbf{Case 2}: If the new customer is served by one of the vehicles that is already in the vector, e.g., $r = 4$, we could try these potential positions (marked with both * and $\circ$): $[* \quad  4 \quad 4 \quad 2 \quad 2 \quad \circ \quad 1 \quad \circ]$. When we try positions marked with $\circ$, such as: $[4 \quad 4 \quad 2 \quad 2 \quad \circ \quad 1]$, we should also compute the cost of $V = [2 \quad 2 \quad 4 \quad 4 \quad 4 \quad 1]$ because this schedule is at least as good as $W = [4 \quad 4 \quad 2 \quad 2 \quad 4 \quad 1]$.

This is because for both $V$ and $W$, the ready time for route 4 is the same but $V$ can lead to an earlier departure time for route 2, which is cost saving.

\subsubsection{The Piece-wise Linear Function for the CP-VRP}

An insertion or removal of one product in the production schedule may affect the departure time for several vehicles. If the departure time is changed, the delay cost for the route may change. Further, it is time-consuming to go through all the positions in the affected route to calculate the change in the delay cost. 

When the route is fixed, we find that the total delay of the route can be expressed as a non-decreasing piece-wise linear function \change{($Delay(\psi)$)} of the route departure time, where input \change{$\psi$} is the departure time and \change{$Delay(\psi)$} is the corresponding delay cost. The function has a typical shape, as given in Figure \ref{piecewise}. With the help of \change{$Delay(\psi)$}, we can store the information of how the delay cost changes with the departure time. Then, each time we obtain a new departure time, we can easily obtain the new delay cost and thus the computational time can be largely reduced. 

To introduce how the \change{$Delay(\psi)$} helps save computational time, we present an example of a route in Figure \ref{example_piecewise}, and its corresponding piece-wise linear function in Figure \ref{piecewise}. In Figure \ref{example_piecewise}, the two boxes illustrate the start and end of the route, and the circles illustrate the customers visited. The numbers in square brackets above the customers are the time windows. The numbers under the edges are travel times, and the numbers in parentheses are service times. This route would not be feasible in an ordinary VRPTW setting because it would be impossible to visit customer 2 after having visited customer 1 with the given time windows. However, in both the CP-VRP and MoP-VRP, such a solution is feasible, because delay is allowed. In Figure \ref{piecewise}, we see that the slope is different in each interval; this represents the different number of customers whose delay costs change with the departure time. The numbers shown in the x-axis, 15 25 65, are the thresholds that mark the change of increment rate. Here, 65 is the maximum departure time to exceed that will cause an infeasible solution. Delay can be calculated as follows: if the departure time becomes 8, $Delay(8) = 15$, which is equal to $Delay(0)$; if the departure time becomes 20, $Delay(20) = 20$.

When analysing the computational complexity of the piece-wise linear function, it takes \change{$O(n_r)$} to construct the piece-wise linear function. There is no need to reconstruct the function if the route does not change, and it takes \change{$O(log(n_r))$} to compute the delay if the departure time does change \change{($n_r$ is the number of customers in the route $r$)}. Without the piece-wise linear cost function, it would take \change{$O(n_r)$} to compute the delay once the departure time has changed. Therefore, the proposed function can help accelerate the algorithm.

\begin{figure}[htbp]
\centering
\begin{minipage}[h]{0.48\linewidth}
\includegraphics[width=0.8\textwidth]{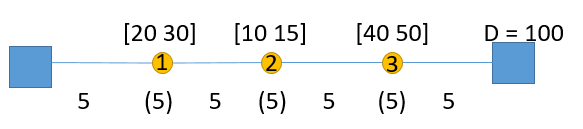}
\caption{Example to explain the piece-wise linear function \label{example_piecewise}}
\end{minipage}
\begin{minipage}[h]{0.48\linewidth}
\includegraphics[width=0.8\textwidth]{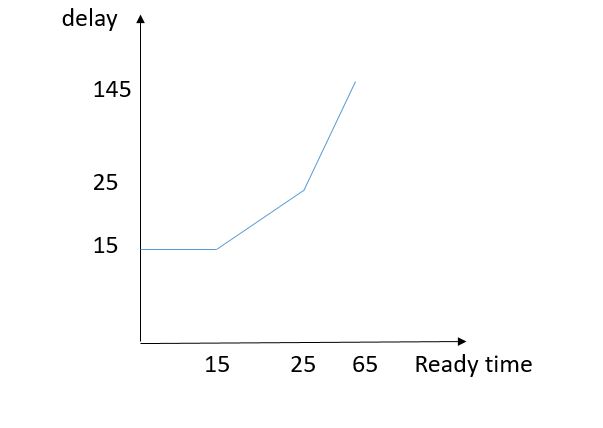}
\caption{Piece-wise Linear Function \label{piecewise}}
\end{minipage}
\end{figure}
\subsubsection{Comparison of different insertion strategy}

The most accurate way to calculate the cost increment when we insert a customer is shown in Alg. \ref{Alg:int}, by which we need to try all the positions in each route. For each delivery position, we need to try all the potential production sequence to ensure that the final customer insertion causes the smallest increment cost. We call this strategy the integrated strategy. 
\begin{algorithm}[h]
\begin{algorithmic}[1]
\For{each non-inserted customer $i$}
\For{each Route \change{$r$}}
\For{each position \change{$\rho$ in route $r$}}
\State{calculate travel increment at position \change{$\rho$}}
\State{Update the Piece-wise Linear Function for route \change{$r$} temporarily}
\For{each machine \change{$l$}}
\For{each position \change{$\nu$ in the schedule of $l$}}
\State{Compute the delay increment for the whole solution}
\State{\change{$(i^*, r^*, \rho^*, l^*, \nu^*) = (i, r, \rho, l, \nu)$} if the total cost increment is the least}
\EndFor
\EndFor
\EndFor
\EndFor
\EndFor
\State{\change{Return $(i^*, r^*, \rho^*, l^*, \nu^*)$}}
\end{algorithmic}
\caption{The integrated strategy for the CP-VRP} \label{Alg:int}
\end{algorithm}

In line 5 in Alg. \ref{Alg:int}, we need to update the piece-wise linear function for route $r$ because the route has changed and the pre-stored cost function cannot be used anymore. We update the function temporarily, and it will turn back to the original form when we finish the computation for position \change{$\rho$} in route \change{$r$}. We also apply the worst-case analysis to estimate the computational complexity of Alg. \ref{Alg:int}. It is found that the computational complexity for Alg. \ref{Alg:int} is \change{$O(n^2(\frac{n}{\kappa}+m'\kappa^2log(\frac{n}{\kappa} + 1))) = O(\frac{n^3}{\kappa}+n^2m'\kappa^2log(\frac{n}{\kappa} + 1)$}, where $n$ is the number of customers, \change{$m'$} is the total number of machines in the depot, and \change{$\kappa$} is the number of vehicles in the instance. In the computational complexity, $n^2$, \change{$\frac{n}{\kappa}$}, and \change{$m'$} come from lines 1-3, line 5, and line 6, respectively. The \change{$\frac{n}{\kappa}$} comes from the analysis that to have the same number of customers in each route will cause the highest computational complexity. The \change{$\kappa^2log(\frac{n}{\kappa} + 1)$} is composed of two parts: \change{$\kappa$ and $\kappa log(\frac{n}{\kappa} + 1)$, where $\kappa$} comes from line 7 because there will be at most \change{$\kappa$} positions to consider due to the analysis from section 4.2.2, and \change{$\kappa log(\frac{n}{\kappa} + 1)$ }comes from line 8, where the piece-wise linear cost function is applied. Under the assumption that \change{$\kappa = O(n)$}, the computational complexity can be simplified to \change{$O(n^2 + m'n^4)$}. We can omit the $O(n^2)$ part, so the final computational complexity is \change{$O(m'n^4)$}.

To save the computational time, we apply a decomposition strategy for the CP-VRP. The pseudo-code for the proposed strategy is shown in Alg.\ref{Alg:FindBestInsertion}. 

\begin{algorithm}
\begin{algorithmic}[1]
\For{each non-inserted Customer $i$}
\For{each Route \change{$r$} }
\State{Find insertion of Customer $i$ in route \change{$r$} (assume that route ready time is not changed by insertion)}
\State{Save insertion if total cost increment is the best}
\EndFor
\State{Update the Piece-wise Linear Function for the best route}
\For{each Machine \change{$l$}}
\For{each position \change{$\nu$ in the schedule of Machine $l$}}
\State{Compute the delay increment for the whole solution}
\State{Save the insertion if delay increment is the best}
\EndFor
\EndFor
\State{\change{$(i^*, r^*, \rho^*, l^*, \nu^*) = (i, r, \rho, l, \nu)$} if total cost increment is the best}
\EndFor
\State{\change{Return $(i^*, r^*, \rho^*, l^*, \nu^*)$}}
\end{algorithmic}
\caption{The decomposition strategy} \label{Alg:FindBestInsertion}
\end{algorithm}

It can be seen from Alg. \ref{Alg:FindBestInsertion} that we fix the delivery sequence at first without considering the change of ready time. We then find the best production schedule that minimizes the cost increment for all vehicles. Computational complexity is largely reduced because we separate the loop for the routes and the machines. By applying the worst-case analysis, the computational complexity for Alg. \ref{Alg:FindBestInsertion} is \change{$O(\frac{n^3}{\kappa} + nm'\kappa^2log(\frac{n}{\kappa} + 1))$, where $\frac{n^3}{\kappa}$ comes from lines 1-5 and $nm'\kappa^2log(\frac{n}{\kappa} + 1))$ comes from lines 7-12. Under the assumption that $\kappa = O(n)$, the computational complexity can be simplified to $O(n^2 + mn^3)$. We can omit the $O(n^2)$ part, so the final computational complexity is $O(m'n^3)$. By applying this strategy, we can reduce the computational complexity from $O(m'n^4)$ to $O(m'n^3)$.}

We need to redefine the approach to find $k$ best routes in regret-k insertion because it is different from that in the integrated method. As shown in Alg. \ref{Alg:newregretk}, for each customer, we choose the $k$ routes with the lowest travel increment first. Then, we update the cost increment for the picked $k$ routes by searching for the best production schedule. After that, we sort the $k$ routes in ascending order of the updated cost increment.

\begin{algorithm}[h]
\begin{algorithmic}[1]
\For{each Route $r$}
\State{Find insertion of $i$ in route $r$ (assume that route ready time is not changed by insertion)}
\State{Save insertion if travel increment is best}
\EndFor
\State{Find the $k$ routes with lowest travel increment}
\For{each route $r \in$ the $k$ Best Routes}
\For{each Machine $l$ }
\For{each position in the schedule of Machine $l$}
\State{Compute the delay increment for the whole solution}
\State{Save the insertion if delay increment is the best}
\EndFor
\EndFor
\State{Save the cost increment for route $r'$}
\EndFor
\State{Sort the chosen $k$ routes in ascending order of cost increment}
\end{algorithmic}
\caption{The best $k$ routes for each customer in the decomposition strategy} \label{Alg:newregretk}
\end{algorithm}

\section{Computational Results}

To evaluate the performance of the proposed mathematical model and ALNS, a series of experiments have been conducted on two different instance sets. \change{The codes have been written in a workstation by using C++ with Windows 10, Intel Core i9-7940X, 3.10GHz, 32 GB RAM. CPLEX 12.8 is used to solve the MIP models.}

\subsection{Instances}
We generate two sets of instances. One set is based on the well-known VRPTW benchmark Solomon Instance (\citeauthor{Solomon1987Algorithms}, \citeyear{Solomon1987Algorithms}) and the Gehring and Homberger's extended VRPTW Instance (\citeauthor{gehring1999parallel}, \citeyear{gehring1999parallel}), where we use the same vehicle capacity, customer locations, duration, time windows, service times, and demands as in the benchmark instances. The other set is constructed to mimic a real-life scenario where mobile production is employed.

We create different scenarios by changing the production time and number of machines per vehicle. The number of machines per vehicle is either 1, 2, or 4. The production time $p_i$ for customer $i$ is determined by the formula $p_i = \mu d_i$, where $\mu$ is the production time for each demand unit (we name it production time coefficient) and is set as either 1, 3, or 5, and $d_i$ is the demand for customer $i$.

To set the maximum number of vehicles in each instance, we run a greedy sequential insertion algorithm that inserts all customers in a solution. The number of routes this algorithm terminates with is the maximum number of vehicles of the instance.

For the realistic instances, we consider a delivery scenario in Copenhagen, Denmark as the setting. We generate data using the spare-parts-for-repairs case, as described in the introduction.

\begin{figure}[htbp]
\centering
\begin{minipage}[h]{0.38\linewidth}
\includegraphics[width=0.8\textwidth]{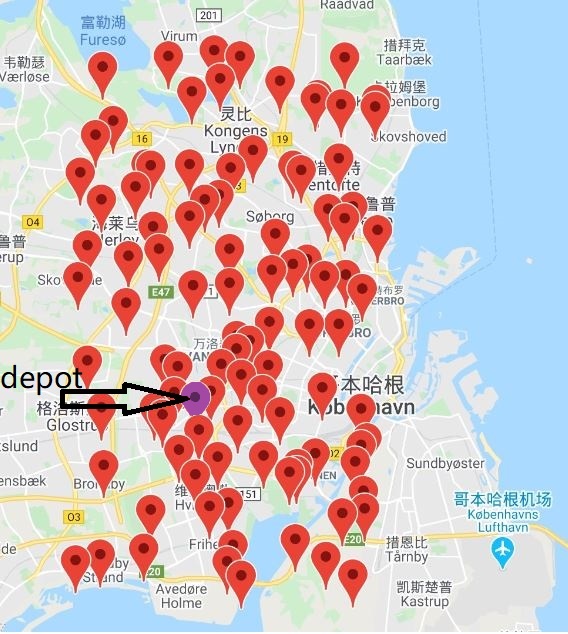}
\caption{Real Life Instance\label{5}}
\end{minipage}
\begin{minipage}[h]{0.38\linewidth}
\includegraphics[width=0.8\textwidth]{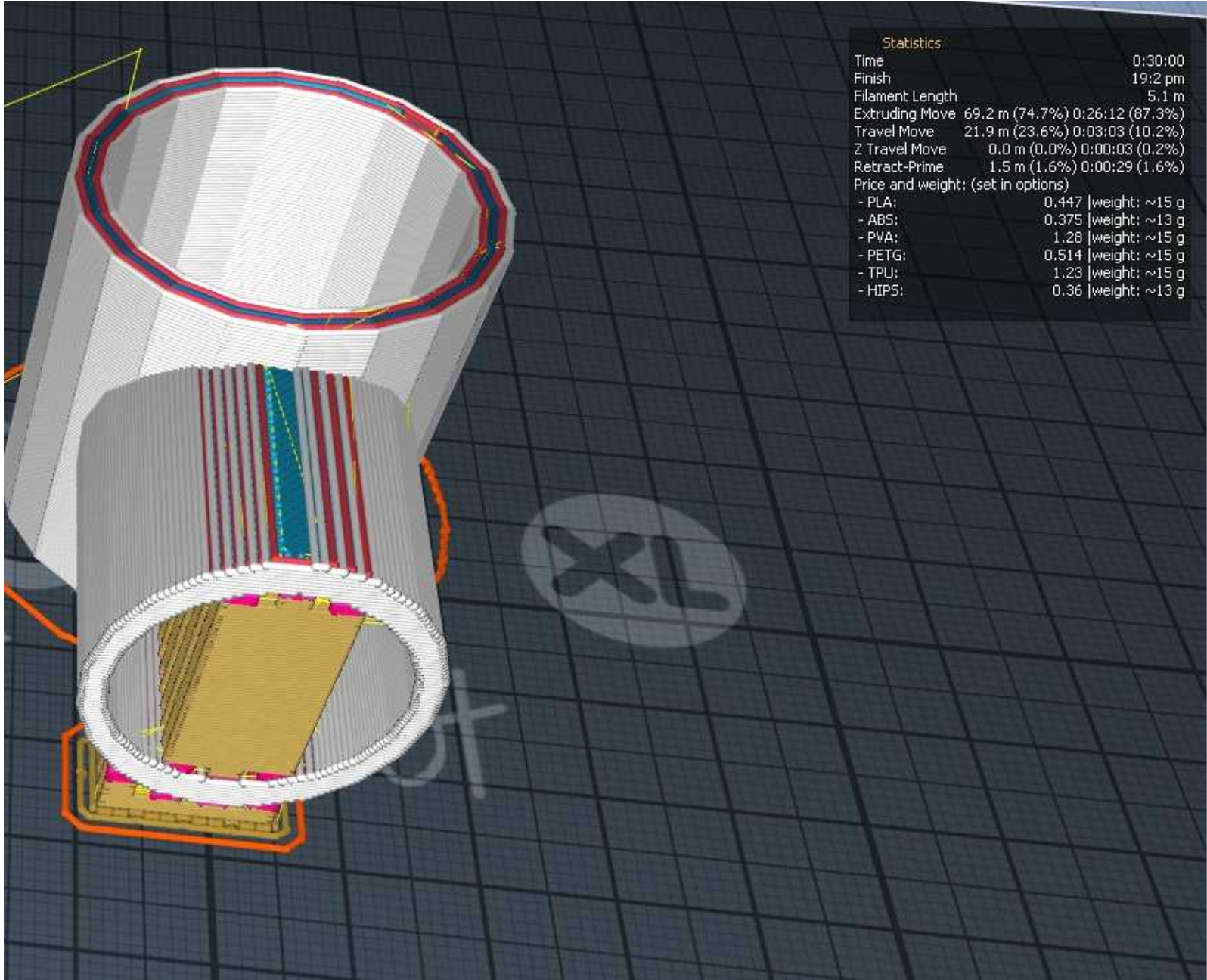}
\caption{Production Example\label{6}}
\end{minipage}
\end{figure}

As shown in Figure \ref{5}, we randomly generate 100 coordinates in a 20 by 30 km rectangle in the greater Copenhagen area. One of them is chosen as the depot and the rest as customers. The distances between the positions are generated by using Google Maps API, and the travel times are calculated using the distances divided by an average speed of 50 km/h. We assume that the drivers, working hours are from 11 am to 9 pm (i.e. 600 minutes) per day.  

We also generate 25 and 50 customer instances, as the company may not receive many orders at the beginning stage of this new business mode due to immature technology or the small market. For the 25 and 50 customer scenarios, we randomly pick 25 and 50 customers, respectively, from the 99 customers. The service time for each customer is a random number within 1-5 minutes. 

We create six scenarios in total, each differing in terms of the type of production time and time windows. We have three types of production time: small production time (abbreviated S), which ranges from 20 to 30 minutes; medium production time (30-40 mins, abbreviated M); and high production time (30-60 mins, abbreviated H). The three types of production times are estimated by supplying different 3D models of various spare parts to the Craftbot slicing program (Craftware, see Figure \ref{6}). The program allows us to compute the estimated printing time for any given 3D model. 

We create two types of time windows: wide time windows (abbreviated W) and tight time windows (abbreviated T). The length of the time window is between 30 and 60 minutes for the wide time window, and is within 30 minutes for the tight time window. 

Based on the above setting, we named the six scenarios as follows: short production time with wide time window (S\_W), medium production time with wide time window (M\_W), high production time with wide time window (H\_W), short production time with tight time window (S\_T), medium production time with tight time window (M\_T) and high production time with tight time window (H\_T). For each scenario, we generate five random instances with random production times and time windows. 

The instances are available at \url{https://zenodo.org/record/4892330#.YLczY4Xitdg}.

During the implementation, we define the early production time as \change{$H = \frac{\epsilon P}{m\kappa}$, where $P = \sum^{n}_{i = 1} p_i$ is the total production time and $\epsilon$ is the early production coefficient. We set $\epsilon = 0.75$ } and will explain why this value is chosen in Section 5.4. 

\subsection{Parameter Tuning}

In this section, we report results from parameter tuning on the initial threshold and the number of customers to remove, which from our experience, are very important to the algorithm. We randomly pick five instances for each size (25, 50, 100, 200) (i.e., 20 instances in total) to conduct the parameter tuning test, and we let the ALNS run 10 times for each instance for each parameter setting. The values shown from Table \ref{InitTemp} to Table \ref{noremove200_cp} represent the gap between the \change{average objective value $\bar{z}$ over 10 runs for each parameter and the best objective value $z^*$ found in the 10 runs among all the parameters, calculated as $\frac{\bar{z} - z^*}{z^*}\times100\%$.}

During the tuning for the initial threshold, the range to remove customers is set as (25\%-50\%). Table \ref{InitTemp} and Table \ref{InitTemp_cp} show the influence of the initial threshold $T_{initial}$. When $T_{initial}$ is too low or too high, the heuristic has a higher chance of being trapped in one suboptimal area of the search space. The results show
that $T_{initial} = 10\%$ and $T_{initial} = 17.5\%$ are the best choices for MoP-VRP and CP-VRP respectively.

\begin{table}[h]
\tiny
\resizebox{\textwidth}{5.5mm}{
\begin{tabular}{ccccccccccccc}
\hline
Initial Threshold/\% & 1      & 2.5    & 5      & 7.5    & \textbf{10}     & 12.5   & 15     & 17.5   & 20     & 30     & 40     & 50     \\
\hline
Avg Gap/\%             & 2.18 & 1.96 & 1.16 & 1.33 & \textbf{0.83} & 0.98 & 0.99 & 1.04 & 1.39 & 1.35 & 1.70 & 2.03\\
\hline
\end{tabular}
}
\caption{Tuning of Initial Threshold (MoP-VRP)} \label{InitTemp}
\end{table}

\begin{table}[h]
\tiny
\resizebox{\textwidth}{5.5mm}{
\begin{tabular}{ccccccccccccc}
\hline
Initial Threshold/\% & 1      & 2.5    & 5      & 7.5    & 10     & 12.5   & 15     & 17.5   & 20     & 30     & 40     & 50     \\
\hline
Avg Gap/\%             & 5.09 & 4.32 & 3.87 & 3.08 & 3.16 & 2.99 & 3.05 & \textbf{2.87} & 2.99 & 3.17 & 3.22 & 3.34\\
\hline
\end{tabular}
}
\caption{Tuning of Initial Threshold (CP-VRP)} \label{InitTemp_cp}
\end{table}

When we tuned the number of customers to remove, we first test the Solomon 25, 50, and 100 instances, then pick the best five ranges to continue the tuning for 200 customer instance. We tune in this way to save computational time because it is very time-consuming to test 200 customer instances. For the MoP-VRP, as shown in Table \ref{noremove100}, the best five ranges for the instances with up to 100 customers are (20\%-50\%), (10\%-40\%), (10\%-45\%), (25\%-45\%), and (25\%-35\%). So they are picked to continue the tuning. Table \ref{noremove200} shows the average results for these five ranges on 25, 50, 100, and 200 customer instances. We find that with the increase of $\lambda_1$, the average gap widens; it is also not good to set $\lambda_2$ as a too large (50\%) or too small (35\%) value. Among these ranges, (10\%-40\%) obtains the smallest gap and therefore is selected as the range of removing customers for the MoP-VRP.

\begin{table}[h]
\centering
\small
\begin{tabular}{llllll}
\hline
\diagbox{$\lambda_1$/\%}{$\lambda_2$/\%}
  & 50     & 45 & 40 & 35     & 30     \\
   \hline
5  & 0.82\%          & 0.71\%      & 1.00\%      & 1.07\% & 1.10\% \\
10 & 0.87\%          & 0.55\%      & 0.52\%      & 0.81\% & 1.13\% \\
15 & 0.76\%          & 0.64\%      & 0.66\%      & 0.74\% & 0.94\% \\
20 & \textbf{0.40\%} & 0.77\%      & 0.81\%      & 0.87\% & 1.01\% \\
25 & 0.80\%          & 0.56\%      & 0.65\%      & 0.63\% & 0.97\%
\\
\hline
\end{tabular}
\caption{Tuning of range of customers to remove (Solomon 25, 50, 100) (MoP-VRP)} \label{noremove100}
\end{table}

\begin{table}[h]
\centering
\small
\begin{tabular}{cccccc}
\hline
Range & (20\%, 50\%) & \textbf{(10\%, 40\%)} & (10\%, 45\%)&  (25\%, 45\%) &  (25\%, 35\%)    \\
\hline
Avg Gap/\%             & 1.36 & \textbf{0.65} & 0.7 & 2.07 & 2.01\\
\hline
\end{tabular}
\caption{Tuning of range of customers to remove (25, 50, 100, 200) (MoP-VRP)} \label{noremove200}
\end{table}

For the CP-VRP, based on the results shown in Table \ref{noremove100_cp}, we pick the best five ranges as follows: (10\%-35\%), (10\%-40\%), (10\%-45\%), (10\%-50\%), and (5\%-50\%) to continue tuning the 200 customer instances. Table \ref{noremove200_cp} shows the average results for all 25, 50, 100, and 200 customer instances. We can see that (5\%-50\%) is the best range for removing customers for the CP-VRP.

\begin{table}[h]
\centering
\small
\begin{tabular}{llllll}
\hline
 \diagbox{$\lambda_1$/\%}{$\lambda_2$/\%}  & 30     & 35 & 40 & 45     & 50     \\
   \hline
5  & 2.29\% & 2.26\% & 2.10\% & 2.14\% & \textbf{2.01\%} \\
10 & 2.35\% & 2.06\% & 2.05\% & 2.06\% & 2.03\% \\
15 & 2.34\% & 2.27\% & 2.18\% & 2.15\% & 2.28\% \\
20 & 2.32\% & 2.40\% & 2.34\% & 2.13\% & 2.35\% \\
25 & 2.34\% & 2.15\% & 2.31\% & 2.26\% & 2.11\%
\\
\hline
\end{tabular}
\caption{Tuning of range of customers to remove (Solomon 25, 50, 100) (CP-VRP)} \label{noremove100_cp}
\end{table}

\begin{table}[h]
\centering
\tiny
\begin{tabular}{cccccc}
\hline
Range & (10\%-35\%) & (10\%-40\%) & (10\%-45\%)&  (10\%-50\%) &  \textbf{(5\%-50\%)}    \\
\hline
Avg Gap/\%             & 3.17 & 2.98 & 2.78 & 2.71 & \textbf{2.57}\\
\hline
\end{tabular}
\caption{Tuning of range of customers to remove (25, 50, 100, 200) (CP-VRP)} \label{noremove200_cp}
\end{table}

\subsection{Comparison between the ALNS and the CPLEX on Small Instances}

We test the MIP models for both problems on the same data set, and we give a time limit of 3600s (1h) for CPLEX to solve each instance. The instance is set with three machines in each vehicle and a production coefficient $\mu = 2$. We generate the small-size instances by randomly picking 10, 15, and 20 customers from the Solomon 25 instances.

Table \ref{tb:cplex_mop} - \ref{tb:cplex_cp} compare the results obtained by CPLEX and ALNS. The first three columns present the number of customers, the instance names and numbers of the different instances. For the CPLEX, we present the average upper bound (UB), the average lower bound (LB), the average computational time, the LB GAP (calculated as $\frac{UB - LB}{LB}\times100\%$), the number of instances solved to optimality, and the number of instances where feasible solutions are found. For the ALNS, we present the average results, best results, and the average computational times. The last column shows the gap between the average results by ALNS and the UB by CPLEX, calculated as $\frac{Avg - UB}{UB}\times100\%$. As we present the results with one digit decimal, some cells with small values in the columns LB GAP and GAP appear as 0.0\%. 

\begin{table}[h]
\centering
\resizebox{\textwidth}{20mm}{
\begin{tabular}{ccc|ccccccc|cccc|c}
\hline
            &      &              &  &       &        & CPLEX        &           &             &  &   &ALNS       &          &  & GAP    \\
\hline
\# customer & Name & \# Instance & UB    & LB    & Time(s)  & LB GAP  & \# To Opt & \# Feasible &  & Avg   & Best  & Avg Time(s) &  &        \\
\hline
            & C    & 17           & 106.6 & 106.6 & 0.5    & 0.0\%   & 17        & 17          &  & 106.6 & 106.6 & 0.5      &  & 0.0\%  \\
10          & R    & 23           & 221.3 & 221.3 & 2.7    & 0.0\%   & 23        & 23          &  & 221.4 & 221.4 & 0.4      &  & 0.0\%  \\
            & RC   & 16           & 203.3 & 198.4 & 448.6  & 3.8\%   & 15        & 16          &  & 203.3 & 203.3 & 0.4      &  & 0.0\%  \\
            \hline
            & C    & 17           & 166.0 & 166.0 & 220.4  & 0.0\%   & 17        & 17          &  & 166.0 & 166.0 & 1.0      &  & 0.0\%  \\
15          & R    & 23           & 315.8 & 295.7 & 1724.7 & 6.2\%   & 14        & 23          &  & 314.3 & 314.3 & 1.0      &  & -0.5\%  \\
            & RC   & 16           & 288.3 & 177.8 & 2422.9 & 96.7\%  & 4         & 12          &  & 270.0 & 269.5 & 1.1      &  & -6.3\%  \\
            \hline
            & C    & 17           & 334.2 & 224.3 & 1710.8 & 43.5\%  & 9         & 17          &  & 305.6 & 305.6 & 2.2      &  & -8.6\%  \\
20          & R    & 23           & 357.0 & 333.3 & 1270.7 & 8.5\%   & 10        & 12          &  & 340.4 & 340.4 & 2.8      &  & -4.7\%  \\
            & RC   & 16           & 573.8 & 257.0 & 3600.0 & 130.1\% & 0         & 8           &  & 379.8 & 379.2 & 3.2      &  & -33.8\%\\
            \hline
\end{tabular}}
\caption{CPLEX v.s. ALNS for MoP-VRP}
\label{tb:cplex_mop}
\end{table}

As shown in Table \ref{tb:cplex_mop}, the proposed ALNS for the MoP-VRP finds the same or better quality solutions than CPLEX in most instances within a much shorter computational time. There is only 1 out of 168 instances where ALNS obtains a worse average result than that of CPLEX. The gap between the results by CPLEX and ALNS grows as the instance size increases. The C instances are easier to solve compared to the R and RC instances; it becomes increasingly difficult for the CPLEX to find a feasible solution within 1 hour as the instance size increases.

The proposed ALNS for the CP-VRP also outperforms CPLEX as can be seen from Table \ref{tb:cplex_cp}. Only 6 of 168 instances in total are worse than the CPLEX. CPLEX could not find any good solutions for the CP-VRP when there are 20 customers. 

\begin{table}[h]
\centering
\resizebox{\textwidth}{20mm}{
\begin{tabular}{ccc|ccccccc|cccc|c}
\hline
            &      &              &  &       &        & CPLEX        &           &             &  &   &ALNS       &          &  & GAP    \\
\hline
\# customer & Name & \# Instance & UB    & LB    & Time(s)   & LB GAP  & \# To Opt & \# Feasible &  & Avg   & Best  & Avg Time(s) &  &        \\
\hline
                        & C    & 17           & 129.9 & 129.9 & 186.8  & 0.0\%  & 17        & 17          &  & 129.9 & 129.9 & 0.6      &  & 0.0\%   \\
10          & R    & 23           & 215.8 & 215.7 & 3.4    & 0.0\%  & 23        & 23          &  & 215.9 & 215.9 & 0.7      &  & 0.0\%   \\
            & RC   & 16           & 174.7 & 174.6 & 38.0   & 0.0\%  & 16        & 16          &  & 174.7 & 174.7 & 0.8      &  & 0.0\%   \\
            \hline
             & C    & 17           & 166.2 & 166.0 & 261.5  & 0.1\%  & 16        & 17          &  & 166.2 & 166.2 & 1.2      &  & 0.0\%   \\
15          & R    & 23           & 304.4 & 302.8 & 704.1  & 0.5\%  & 20        & 23          &  & 303.8 & 303.4 & 1.4      &  & -0.2\%  \\
            & RC   & 16           & 233.9 & 194.9 & 2386.5 & 29.1\% & 6         & 16          &  & 230.3 & 230.0 & 1.6      &  & -1.5\%  \\
            \hline
            & C    & 17           & 356.7 & 227.2 & 2543.5 & 39.1\% & 5         & 17          &  & 319.0 & 319.0 & 2.5      &  & -10.6\% \\
20          & R    & 23           & 412.4 & 323.9 & 2031.0 & 27.6\% & 11        & 23          &  & 359.4 & 359.1 & 3.2      &  & -12.8\% \\
            & RC   & 16           & 418.5 & 244.4 & 3600.0 & 89.9\% & 0         & 15          &  & 382.5 & 382.2 & 4.0      &  & -8.6\% \\
            \hline
\end{tabular}}
\caption{CPLEX v.s. ALNS for CP-VRP}
\label{tb:cplex_cp}
\end{table}

\subsection{Comparison between MoP-VRP and CP-VRP on Large Benchmark Instances}

This section presents the results of the ALNS for the MoP-VRP and the CP-VRP on the Solomon instances and Gehring and Homberger's 200 customer instances. We analyse the characteristics of the solution and compare the two problems.

First, we need to determine a suitable value for the early production coefficient $\epsilon$ for the CP-VRP. \change{Values: 0, 0.25, 0.5, 0.75, and 1} are selected and tested on the Solomon 25 and 50 instances. \change{In the CP-VRP, vehicles need to first wait for the production then start delivery. To avoid infeasibility caused by the waiting at the depot, we set the vehicle duration 10 times of that in the MoP-VRP.} As can be seen from the table, \change{the travel costs in the MoP-VRP and the CP-VRP are similar, but the delay costs are quite different. Comparing the delay in the CP-VRP without early production (i.e., $\epsilon = 0$) and in the MoP-VRP, we can see that mobile production helps reduce the total delay by more than 95\%. As $\epsilon$ increases, the delay in the CP-VRP decreases dramatically. When $\epsilon$ reaches 1, all the products are nearly ready for delivery at time 0, and the corresponding CP-VRP is close to a VRPTW. When $\epsilon = 0.75$, the delay in the CP-VRP is comparable to that in the MoP-VRP, we therefore set $\epsilon$ to 0.75 in the rest of the tests. }

\begin{table}[h]
\centering
\tiny
\change{
\begin{tabular}{ccccccccc}
\hline
                      n   &      & \multicolumn{3}{c}{25}            &  & \multicolumn{3}{c}{50}            \\
                         \hline
                       & Name & Avg travel & Avg delay & Avg cost &  & Avg travel & Avg delay & Avg cost \\
\hline
\multirow{3}{*}{$\epsilon = 0$}       & C    & 240.3      & 1513.3    & 1753.6   &  & 447.5      & 1945.1    & 2392.6   \\
                         & R    & 415.5      & 1507.7    & 1923.2   &  & 744.4      & 2270.0    & 3014.4   \\
                         & RC   & 394.2      & 4634.4    & 5028.6   &  & 764.1      & 6082.5    & 6846.6   \\
                         \hline
\multirow{3}{*}{$\epsilon = 0.25$}    & C    & 239.1      & 855.8     & 1094.9   &  & 428.8      & 747.7     & 1176.4   \\
                         & R    & 416.9      & 695.5     & 1112.4   &  & 762.3      & 731.9     & 1494.2   \\
                         & RC   & 411.2      & 2471.4    & 2882.6   &  & 814.1      & 2423.8    & 3237.9   \\
                         \hline
\multirow{3}{*}{$\epsilon = 0.5$}     & C    & 238.5      & 400.3     & 638.9    &  & 404.8      & 202.5     & 607.2    \\
                         & R    & 438.6      & 174.4     & 612.9    &  & 744.6      & 143.1     & 887.7    \\
                         & RC   & 384.9      & 915.8     & 1300.7   &  & 807.8      & 550.3     & 1358.1   \\
                         \hline
\multirow{3}{*}{$\epsilon = 0.75$}    & C    & 229.0      & 116.5     & 345.5    &  & 382.3      & 39.9      & 422.2    \\
                         & R    & 433.6      & 35.1      & 468.7    &  & 701.9      & 22.2      & 724.1    \\
                         & RC   & 368.0      & 126.6     & 494.6    &  & 699.5      & 96.7      & 796.2    \\
                         \hline
\multirow{3}{*}{$\epsilon = 1$}       & C    & 221.3      & 11.2      & 232.5    &  & 380.8      & 0.6       & 381.4    \\
                         & R    & 434.6      & 23.8      & 458.4    &  & 694.2      & 17.8      & 712.0    \\
                         & RC   & 376.6      & 17.2      & 393.8    &  & 657.1      & 40.2      & 697.3    \\
                         \hline
\multirow{3}{*}{MoP-VRP} & C    & 231.9      & 35.8      & 267.7    &  & 394.8      & 102.9     & 497.7    \\
                         & R    & 457.5      & 50.8      & 508.4    &  & 776.4      & 94.9      & 871.3    \\
                         & RC   & 529.0      & 90.3      & 619.3    &  & 972.7      & 136.0     & 1108.7 \\
                         \hline
\end{tabular}
}

\caption{The Average Cost with Different Early Production Coefficient}
\label{tb:earlyprodcoeff}
\end{table}

\begin{figure}[htbp]
\centering
\includegraphics[width=0.5\textwidth]{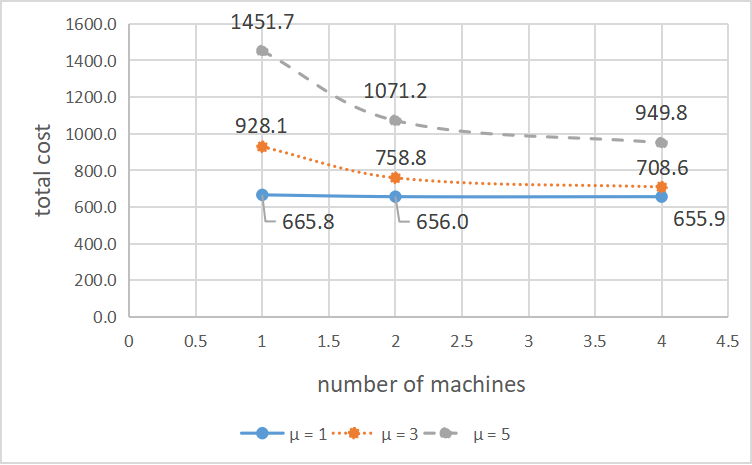}
\caption{Total cost v.s. $m/\kappa$ and $\mu$ \label{summary_bench}}
\end{figure}

We run the ALNS for the MoP-VRP and the CP-VRP on the Solomon instances and Gehring and Homberger’s 200 customer instances and find that the key factors that influence the objective value are the production time and the number of machines set in each vehicle. Figure \ref{summary_bench} summarizes how the total cost changes with the number of machines and production time for the MoP-VRP. As can be seen from the figure, when the machine number is fixed, the higher production time coefficient will lead to a higher total cost; when the production time coefficient ($\mu$) is fixed, the higher number of machines in each vehicle helps reduce the total cost. This trend becomes increasingly obvious as $\mu$ becomes larger and larger. 

When the production time coefficient is 1, the influence of the number of machines on the total cost is not obvious. This shows that when the production time is small, a few machines are enough, the number of machines is not the bottleneck to obtaining a good distribution plan for the MoP-VRP. When the production time coefficient increases (i.e., 3, 5), the total cost decreases significantly, typically when the number of machines in each vehicle increases from 1 to 2. The total cost decreases by a comparatively small amount when the number of machines in each vehicle increases from 2 to 4.

\subsection{Results on Realistic Instances}\label{sec:ResultsRealistic}

This section analyses the travel and delay costs for both the MoP-VRP and CP-VRP. The long term costs are then estimated, and advice on operation strategy is given.

\subsubsection{The delivery cost}

Table \ref{tb:real100} shows the average results for the MoP-VRP and CP-VRP on the realistic instances with 99 customers. The first three columns show scenario names, the number of machines in each vehicle, and the average number of vehicles. For both the MoP-VRP and the CP-VRP, we show the average traveled distance (in miles), the average delay (in minutes), the average total cost, and the average computational time for the MoP-VRP. In addition, we also show the average early production time for the CP-VRP. Each row in the table presents the average results of five random instances over ten random runs. 

\begin{table}[h]
\centering
\resizebox{\textwidth}{50mm}{
\begin{tabular}{cccccccccccccc}
\hline
                        &            &            &  & \multicolumn{4}{c}{MoP-VRP}       &  & \multicolumn{5}{c}{CP-VRP}                               \\
                        \cline{1-3} \cline{5-8} \cline{10-14}
                        & $m$ & $\kappa$&  & avg travel & avg delay & avg cost &avg time(s) &  & avg ahead & avg travel & avg delay & avg cost & avg time(s) \\
                        \cline{1-3} \cline{5-8} \cline{10-14}
\multirow{4}{*}{S\_W} & 1 & 5   &  & 319.9 & 1.6   & 321.5 & 47.9  &  & 366.3 & 321.0 & 0.8   & 321.8 & 101.4 \\
                      & 2 & 3.2 &  & 326.8 & 1.8   & 328.6 & 162.5 &  & 290.2 & 343.4 & 3.2   & 346.6 & 109.7 \\
                      & 3 & 3.2 &  & 324.7 & 1.8   & 326.5 & 244.7 &  & 193.5 & 338.3 & 2.4   & 340.7 & 133.5 \\
                      & 4 & 3.2 &  & 324.4 & 1.7   & 326.1 & 315.6 &  & 145.1 & 337.7 & 2.0   & 339.7 & 141.1 \\
                      \hline
\multirow{4}{*}{M\_W} & 1 & 6.8 &  & 359.3 & 9.6   & 368.9 & 31.8  &  & 382.2 & 318.5 & 0.7   & 319.2 & 215.1 \\
                      & 2 & 4   &  & 326.4 & 4.1   & 330.5 & 120.0 &  & 323.8 & 325.8 & 2.2   & 328.1 & 113.6 \\
                      & 3 & 3   &  & 338.5 & 3.9   & 342.5 & 247.9 &  & 287.8 & 342.9 & 3.1   & 345.9 & 114.4 \\
                      & 4 & 3   &  & 335.4 & 4.5   & 339.8 & 329.8 &  & 215.9 & 342.3 & 3.4   & 345.8 & 136.9 \\
                      \hline
\multirow{4}{*}{H\_W} & 1 & 8.2 &  & 456.2 & 143.5 & 599.7 & 23.7  &  & 403.3 & 317.4 & 1.7   & 319.1 & 290.1 \\
                      & 2 & 4.4 &  & 394.2 & 157.2 & 551.4 & 89.5  &  & 378.8 & 320.7 & 2.2   & 322.9 & 119.2 \\
                      & 3 & 3   &  & 369.4 & 17.0  & 386.5 & 196.0 &  & 366.9 & 345.6 & 4.1   & 349.7 & 118.3 \\
                      & 4 & 3   &  & 339.9 & 12.0  & 351.9 & 322.1 &  & 275.2 & 341.4 & 3.8   & 345.2 & 125.8 \\
                      \hline
\multirow{4}{*}{S\_T} & 1 & 5   &  & 373.1 & 111.6 & 484.6 & 40.3  &  & 367.9 & 335.9 & 2.2   & 338.1 & 100.2 \\
                      & 2 & 3   &  & 367.5 & 18.8  & 386.2 & 159.3 &  & 306.6 & 371.7 & 35.7  & 407.3 & 105.4 \\
                      & 3 & 2.6 &  & 367.1 & 163.0 & 530.0 & 304.8 &  & 245.6 & 363.7 & 268.1 & 631.8 & 132.4 \\
                      & 4 & 2.6 &  & 366.7 & 166.1 & 532.7 & 390.6 &  & 184.2 & 366.2 & 183.2 & 549.3 & 134.6 \\
                      \hline
\multirow{4}{*}{M\_T} & 1 & 7   &  & 383.1 & 32.8  & 415.8 & 30.2  &  & 368.5 & 331.8 & 1.5   & 333.3 & 189.8 \\
                      & 2 & 4   &  & 357.2 & 11.3  & 368.4 & 111.2 &  & 322.4 & 344.1 & 2.5   & 346.6 & 110.5 \\
                      & 3 & 3   &  & 373.8 & 16.4  & 390.2 & 236.6 &  & 286.6 & 376.1 & 13.5  & 389.6 & 114.8 \\
                      & 4 & 2.8 &  & 372.4 & 71.1  & 443.5 & 351.8 &  & 236.5 & 367.5 & 216.8 & 584.3 & 137.0 \\
                      \hline
\multirow{4}{*}{H\_T} & 1 & 8.4 &  & 432.2 & 196.0 & 628.2 & 23.4  &  & 397.8 & 330.3 & 1.6   & 331.9 & 287.7 \\
                      & 2 & 4.4 &  & 400.0 & 162.9 & 562.9 & 86.3  &  & 382.7 & 343.4 & 3.7   & 347.1 & 119.6 \\
                      & 3 & 3.2 &  & 387.0 & 161.5 & 548.6 & 189.3 &  & 352.0 & 372.7 & 20.4  & 393.1 & 110.3 \\
                      & 4 & 3   &  & 371.4 & 51.3  & 422.7 & 304.1 &  & 277.8 & 376.1 & 10.2  & 386.3 & 114.6        \\
                    \hline  
\end{tabular}    }  
\caption{Realistic instances - 99 customers}
\label{tb:real100}
\end{table}

\change{On a high level, the average cost for the MoP-VRP and CP-VRP is similar. Certain instance types are more attractive for either MoP-VRP or CP-VRP. At first, this may seem a bit disappointing from an MoP-VRP point of view, and the reader may ask what the advantage of MoP-VRP is. Here it is important to re-iterate that the early production coefficient $\epsilon$ was chosen to make the two problems perform similarly in terms of objective value and that the early production provides a huge advantage for the CP-VRP in terms of reducing delays (see the impact of the $\epsilon$ parameter in Table \ref{tb:earlyprodcoeff})}

The key advantage of mobile production is that it makes distribution more flexible. We find that in the CP-VRP, production has to start at least 2.5 hours (i.e., 145 mins) before the vehicles depart from the depot, and in some cases up to 6.5 (i.e. 400 mins) hours before the vehicles depart. Since the vehicles depart at 11 am, this means that almost no orders from the day of operations will be included in the CP-VRP. For the MoP-VRP, production starts when the vehicles leave the depot, which means it can accept orders until 11 am. Meanwhile, mobile production can keep an acceptable delay for customers. 

Regarding the influence of the number of machines in each vehicle, the overall finding is that setting one machine on each vehicle obtains the lowest total cost in the ``S\_W" scenario. Putting more than one machine in each vehicle lowers the total cost in most scenarios in the MoP-VRP. \change{We only consider up to four machines in each vehicle because this is what we estimate will be feasible to fit into the vans that we used to estimate long term costs.}

\subsubsection{The long-term operation cost estimation}

To put mobile production into practical use, not only should we estimate the optimal number of machines in each vehicle for good distribution but also we need to investigate how many machines in each vehicle will reduce long-term costs the most. Our long-term cost estimation is shown in Table \ref{tb:real100longterm}, and we summarize the results for long-term costs for 25, 50, and 99 customer instances in Table \ref{tb:sumlongterm}. \change{Considering that central production already exists in real-life practice, we only estimate the long-term cost of mobile production to check the feasibility of the new logistics mode. }

The prices we estimate in this work are based on a Danish setting, and costs related to 3D printing equipment are suggested by 3D Printhuset. All the estimated costs can be found in Table \ref{tb:costlist}.

\begin{table}[h]
\centering
\small
\begin{tabular}{lccclcc}
\hline
\textbf{Driver}            &  &                               &  & \textbf{Printer}           &  &                              \\
Hourly wage                &  & \euro{25.6}  &  & Cost of purchasing printer &  & \euro{2300} \\
Work hours per day         &  & 10 &   & Yearly maintenance cost    &  & \euro{345} \\
Work days per week         &  & 5                             &  & Renewal cycle              &  & 2yr            \\
Work weeks per year        &  & 50                            &  & Printer scrap value        &  & \euro{1000}                \\
\textbf{Vehicle}           &  &                               &   & Renewal cost               &  & \euro{1300} \\
Cost of purchasing vehicle &  & \euro{54000} &  & Renewal cost per year      &  & \euro{650} \\
Maintenance cost per year  &  & \euro{5400}  &  & Yearly cost                &  & \euro{995}   \\
\textbf{Fuel}              &  &                               &  &   \textbf{Planning Horizon}                         &  & 10yr   \\
Consumption                &  & 8.08mile/L                    &  &                       &  &                              \\
Price                      &  & \euro{1.1}/L &  &                            &  &     \\
\hline
\end{tabular}
\caption{Cost List}
\label{tb:costlist}
\end{table}

Regarding the wage for drivers, we assume that each worker works for 10 hour per day, five hours per week and 50 weeks per year. The average hourly wage is set to be \euro 25.6, which is the hourly wage for an educated driver in Denmark plus a 30 percent addition that covers insurance, holiday pay, and indirect costs. This means that the cost per driver a year is \euro 63,908. If workers without taking driver education are used for the job, the wage costs will be lower. \change{If we want to estimate the long-term costs associated with the central production mode, we need to consider that the wages for workers will be different because we need more workers in the central warehouse to handle early production; if we let the drivers handle this work, we will need to pay them more. The vehicles used in the central production mode will be a lighter type because there is no need to install printers in them, therefore the price for vehicles will be different as well. }

We consider a 10-year planning horizon and the total cost for planning horizon equals the investment cost plus ten times the yearly cost. From the 10-year total cost, we can calculate the average cost per year and the average cost per order given the total number of orders per year.  
   
Table \ref{tb:real100longterm} shows how we calculate long-term cost. The first four columns present the scenario names, setting, and average travel distance. The next two columns are the number of vehicles and the number of machines the company should purchase. The columns under ``Investment Cost" show the vehicle cost, machine cost, and the total cost. The columns under ``Yearly costs" show the maintenance costs of vehicles, maintenance costs of printers, wages of drivers, fuel costs, and total cost each year. Column 15 shows the total cost over the 10-year planning horizon. Column 16 shows the ratio of the total costs to that when there is only one vehicle in the same scenario. The last three columns show the average cost per year, the number of orders per year, and the cost per order.

Table \ref{tb:sumlongterm} summarizes the proportion of each cost component in the total cost for the instances in different sizes and with different numbers of machines per vehicle. The third column presents the ratio of the total costs to that when there is only one vehicle for each data size. We can see that the drivers' salary takes up the largest part of the total cost (more than 60\%) and that adding more machines to each vehicle can reduce the overall cost.  

Figure \ref{costperdelivery} shows how the average cost per order changes with the number of vehicles at each stage. Figure \ref{costperdelivery} illustrates that when the number of machines in each vehicle is fixed, the average cost per order gradually decreases even as the total number of customers increases. When the number of customers is fixed, the average cost per order drops dramatically when the number of machines increases from 1 to 2. Based on these findings, it is estimated that the cost for mobile production and distribution will continue to decrease as the business mode grows in popularity and technology matures.

\begin{table}[h]
\centering
\resizebox{\textwidth}{40mm}{
\begin{tabular}{ccccccccccccccccccccccc}
\hline
 &            &            &            &  & Vehicles & Machines & \multicolumn{3}{c}{Investment cost} &  & \multicolumn{5}{c}{Yearly costs (Maintenance \& Wage \& Fuel)}                   &  & total cost   & compared to   &  & Cost     & Orders & Cost per \\
 \cline{8-10} \cline{12-16}
Scenario                    & $m$ & $\kappa$ & avg travel &  & to buy   & to buy   & vehicle   & printer   & total      &  & vehicle & printer & drivers  & Fuel   & total    &  & over 10-year & 1 machine(\%) &  & per year & per year   & order \\
\cline{1-4} \cline{6-10} \cline{12-16} \cline{18-19} \cline{21-23}
\multirow{4}{*}{S\_W} & 1 & 5   & 319.9 &  & 6  & 6  & 324000 & 13800 & 337800 &  & 32400 & 5970  & 319540 & 10911 & 368821 &  & 4026008 & 100.0 &  & 402601 & 24750 & 16.3 \\
                      & 2 & 3.2 & 326.8 &  & 5  & 10 & 270000 & 23000 & 293000 &  & 27000 & 9950  & 204505 & 11149 & 252604 &  & 2819042 & 70.0  &  & 281904 & 24750 & 11.4 \\
                      & 3 & 3.2 & 324.7 &  & 5  & 15 & 270000 & 34500 & 304500 &  & 27000 & 14925 & 204505 & 11076 & 257507 &  & 2879567 & 71.5  &  & 287957 & 24750 & 11.6 \\
                      & 4 & 3.2 & 324.4 &  & 5  & 20 & 270000 & 46000 & 316000 &  & 27000 & 19900 & 204505 & 11067 & 262472 &  & 2940723 & 73.0  &  & 294072 & 24750 & 11.9 \\
                      \hline
\multirow{4}{*}{M\_W} & 1 & 6.8 & 359.3 &  & 8  & 8  & 432000 & 18400 & 450400 &  & 43200 & 7960  & 434574 & 12257 & 497991 &  & 5430306 & 100.0 &  & 543031 & 24750 & 21.9 \\
                      & 2 & 4   & 326.4 &  & 5  & 10 & 270000 & 23000 & 293000 &  & 27000 & 9950  & 255632 & 11133 & 303715 &  & 3330150 & 61.3  &  & 333015 & 24750 & 13.5 \\
                      & 3 & 3   & 338.5 &  & 4  & 12 & 216000 & 27600 & 243600 &  & 21600 & 11940 & 191724 & 11547 & 236811 &  & 2611708  & 48.1  &  & 261171 & 24750 & 10.6 \\
                      & 4 & 3   & 335.4 &  & 4  & 16 & 216000 & 36800 & 252800 &  & 21600 & 15920 & 191724 & 11440 & 240683 &  & 2659634 & 49.0  &  & 265963 & 24750 & 10.7 \\
                      \hline
\multirow{4}{*}{H\_W} & 1 & 8.2 & 456.2 &  & 10 & 10 & 540000 & 23000 & 563000 &  & 54000 & 9950  & 524045 & 15563 & 603558 &  & 6598576 & 100.0 &  & 659858 & 24750 & 26.7 \\
                      & 2 & 4.4 & 394.2 &  & 6  & 12 & 324000 & 27600 & 351600 &  & 32400 & 11940 & 281195 & 13446 & 338980 &  & 3741404 & 56.7  &  & 374140 & 24750 & 15.1 \\
                      & 3 & 3   & 369.4 &  & 4  & 12 & 216000 & 27600 & 243600 &  & 21600 & 11940 & 191724 & 12602 & 237866 &  & 2622257 & 44.8  &  & 262226 & 24750 & 10.6 \\
                      & 4 & 3   & 339.9 &  & 4  & 16 & 216000 & 36800 & 252800 &  & 21600 & 15920 & 191724 & 11594 & 240838 &  & 2661180 & 40.3  &  & 266118 & 24750 & 10.8 \\
                      \hline
\multirow{4}{*}{S\_T} & 1 & 5   & 373.1 &  & 6  & 6  & 324000 & 13800 & 337800 &  & 32400 & 5970  & 319540 & 12725 & 370635 &  & 4044147  & 100.0 &  & 404415 & 24750 & 16.3 \\
                      & 2 & 3   & 367.5 &  & 4  & 8  & 216000 & 18400 & 234400 &  & 21600 & 7960  & 191724 & 12534 & 233818 &  & 2572577  & 63.6  &  & 257258 & 24750 & 10.4 \\
                      & 3 & 2.6 & 367.1 &  & 4  & 12 & 216000 & 27600 & 243600 &  & 21600 & 11940 & 166161 & 12521 & 212222 &  & 2365818 & 58.5  &  & 236582 & 24750 & 9.6  \\
                      & 4 & 2.6 & 366.7 &  & 4  & 16 & 216000 & 36800 & 252800 &  & 21600 & 15920 & 166161 & 12507 & 216188 &  & 2414678 & 59.7  &  & 241468 & 24750 & 9.8  \\
                      \hline
\multirow{4}{*}{M\_T} & 1 & 7   & 383.1 &  & 8  & 8  & 432000 & 18400 & 450400 &  & 43200 & 7960  & 447356 & 13066 & 511582 &  & 5566218 & 100.0 &  & 556622 & 24750 & 22.5 \\
                      & 2 & 4   & 357.2 &  & 5  & 10 & 270000 & 23000 & 293000 &  & 27000 & 9950  & 255632 & 12183 & 304765 &  & 3340645 & 60.0  &  & 334065 & 24750 & 13.5 \\
                      & 3 & 3   & 373.8 &  & 4  & 12 & 216000 & 27600 & 243600 &  & 21600 & 11940 & 191724 & 12749 & 238013 &  & 2623725 & 47.1  &  & 262373 & 24750 & 10.6 \\
                      & 4 & 2.8 & 372.4 &  & 4  & 16 & 216000 & 36800 & 252800 &  & 21600 & 15920 & 178942 & 12704 & 229166 &  & 2544457 & 45.7  &  & 254446 & 24750 & 10.3 \\
                      \hline
\multirow{4}{*}{H\_T} & 1 & 8.4 & 432.2 &  & 10 & 10 & 540000 & 23000 & 563000 &  & 54000 & 9950  & 536827 & 14743 & 615520 &  & 6718195 & 100.0 &  & 671820 & 24750 & 27.1 \\
                      & 2 & 4.4 & 400.0 &  & 6  & 12 & 324000 & 27600 & 351600 &  & 32400 & 11940 & 281195 & 13645 & 339180 &  & 3743398 & 55.7  &  & 374340 & 24750 & 15.1 \\
                      & 3 & 3.2 & 387.0 &  & 5  & 15 & 270000 & 34500 & 304500 &  & 27000 & 14925 & 204505 & 13202 & 259632 &  & 2900823 & 43.2  &  & 290082 & 24750 & 11.7 \\
                      & 4 & 3   & 371.4 &  & 4  & 16 & 216000 & 36800 & 252800 &  & 21600 & 15920 & 191724 & 12667 & 241911 &  & 2671911 & 39.8  &  & 267191 & 24750 & 10.8\\
                        \hline
\end{tabular}}  
\caption{Realistic instances - 99 customers - long term cost}
\label{tb:real100longterm}
\end{table}

\begin{table}[h]
\begin{minipage}{0.45\textwidth}
\centering
\tiny
\begin{tabular}{ccccccc}
\hline
& & Compared to & Vehicle  & Printer & Driver  & Fuel  \\
n  & $m$  & 1 machine (\%) & cost &  cost &  cost &  cost \\
\hline
\multirow{4}{*}{25} & 1                          & 100.0                      & 19.79\%           & 2.24\%            & 75.17\%          & 2.80\%         \\
                    & 2                          & 67.0                       & 23.64\%           & 5.36\%            & 66.46\%          & 4.54\%         \\
                    & 3                          & 66.8                       & 23.75\%           & 8.07\%            & 63.64\%          & 4.54\%         \\
                    & 4                          & 68.6                       & 23.12\%           & 10.48\%           & 61.97\%          & 4.43\%         \\
                    \hline
\multirow{4}{*}{50} & 1                          & 100.0                      & 17.37\%           & 1.97\%            & 78.15\%          & 2.51\%         \\
                    & 2                          & 66.8                       & 19.48\%           & 4.41\%            & 72.18\%          & 3.93\%         \\
                    & 3                          & 64.1                       & 19.52\%           & 6.63\%            & 69.77\%          & 4.08\%         \\
                    & 4                          & 65.5                       & 19.09\%           & 8.65\%            & 68.27\%          & 3.98\%         \\
                    \hline
\multirow{4}{*}{99} & 1                          & 100.0                      & 16.00\%           & 1.81\%            & 79.68\%          & 2.50\%         \\
                    & 2                          & 61.2                       & 17.16\%           & 3.89\%            & 75.10\%          & 3.84\%         \\
                    & 3                          & 52.2                       & 17.52\%           & 5.96\%            & 71.89\%          & 4.63\%         \\
                    & 4                          & 51.3                       & 16.98\%           & 7.70\%            & 70.76\%          & 4.56\%   \\
                    \hline
\end{tabular}

\caption{Summary for long term cost} \label{tb:sumlongterm}
\end{minipage}\hfill
\begin{minipage}{0.5\textwidth}
\centering
\includegraphics[width=0.85\textwidth]{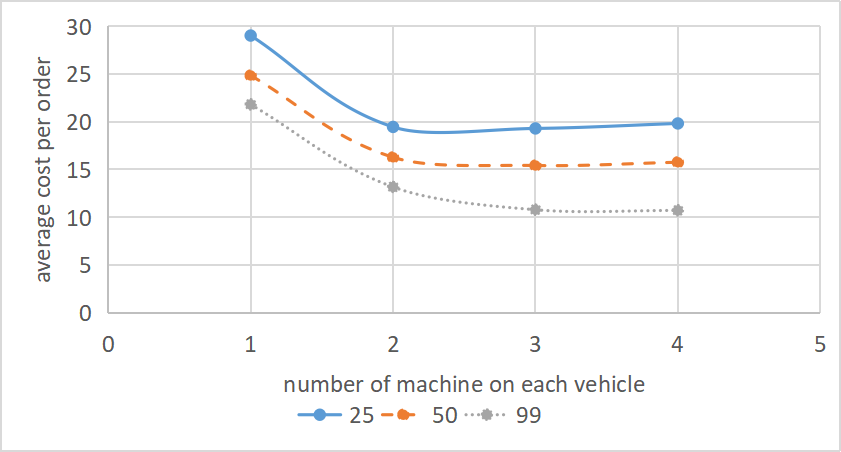}
\captionof{figure}{Cost Per Order \label{costperdelivery}}
\end{minipage}
\end{table}

\section{Conclusion}
This paper introduces a new variant of the vehicle routing problem, called the Mobile Production Vehicle Routing Problem (MoP-VRP), where the production takes place on the way to the customer. The MoP-VRP is highly complex, as it considers multiple products, multiple machines, soft time windows, and production schedules together within the distribution plan. For this complex problem, we propose a mixed integer programming (MIP) model as well as an adaptive large neighbourhood search (ALNS) heuristic. The computational results show that the proposed ALNS is highly efficient and can find the optimal solutions for most of the small instances within a short computational time. We also apply ALNS for the Central Production Vehicle Routing Problem (CP-VRP) and devise smart strategies to accelerate the solution process. One strategy is to avoid calculating redundant production sequences. The other is a piece-wise linear function, which speeds up the computation of delay costs when the production plan changes. The computational results show that the proposed algorithm for the CP-VRP is also efficient and solves most of the small instances to optimality within a short time. To investigate the advantage of the MoP-VRP, we generate realistic instances to compare with the CP-VRP. The instances are generated based on a Danish setting, and the values we use are estimated by 3D Printhuset. The experiments show that the key advantage of the MoP-VRP is flexibility: the MoP-VRP does not require early production and at the same time can keep a lower delivery cost than the CP-VRP. Regarding the realistic instances, we find that the MoP-VRP is feasible in practice due to its low operations costs from a long term estimation. The cost for each delivery will gradually decrease as the customer base increases.

We propose several directions for future research. \change{First, more constraints can be added to the basic model introduced in this paper to help improve the efficiency of this new logistics mode. For example, the concept of delayed differentiation can be introduced, as \citeauthor{su2010impact} (\citeyear{su2010impact}) show that it can result in shorter waiting times in MTO mode.} Second, exact algorithms can be developed for the MoP-VRP. The CPLEX can only solve the problem with up to 15 customers. It would be interesting to
develop an exact algorithm that can provide the optimal solution for larger size instances within a reasonable time. Last but not least, we recommend investigating the dynamic version of the MoP-VRP. It would be interesting to study how incoming requests might affect the final solution in the dynamic MoP-VRP, and how different waiting strategies can be used to improve the quality of the solution. 

\section*{Acknowledgement}
This work was supported by the National Science Foundation of China [11601436] and the Research Development Fund of Xi'an Jiaotong-Liverpool University [RDF-16-02-50].

\bibliography{references}
\section*{Appendix}

For both the \textbf{Geo Removal} and \textbf{Demand Removal}, we firstly remove a random seed customer. Then, the \textbf{Geo Removal} will remove \change{$\Phi-1$} customers that are closest to that seed customer. The \textbf{Demand Removal} will remove \change{$\Phi-1$} customers whose demands are closest to the seed customer. Alg. \ref{opt:geo} shows how the Geo Removal works. The Demand Removal shares the same framework. The only difference is that for the Geo Removal, \change{$c_{qi}$} in line 7 represents the distance between customers $q$ and $i$, whereas in Demand Removal we transfer \change{$c_{qi}$ }to $|d_q - d_i|$, which is the absolute value of the difference in demand between customers $q$ and $i$. 

\begin{algorithm}[H]
\begin{algorithmic}[1]
\State{\textbf{Function} Geo Removal \change{($s, u \in \mathbb{R}_+$)}}
\State{request $q$ = a randomly selected request from $s$;}
\State{Set of requests: \change{$R = \{q\}$};}
\While{$|R| < \Phi$}
\State{Array: $L$ = an array containing all request from $s$ not in $R$;}
\State{Sort $L$ such that}
\change{\State{$i < j \rightarrow c_{qL[i]} < c_{qL[j]}$;}}
\State{choose a random number \change{$\sigma$} from the interval [0,1);}
\State{\change{$R = R \cup \{L[\sigma^u|L|]\}$};}
\EndWhile
\State{remove the requests in $R$ from $s$;}
\end{algorithmic}
\caption{Geo Removal} \label{opt:geo}
\end{algorithm}

Like \citeauthor{ropke2006adaptive} (\citeyear{ropke2006adaptive}), a random parameter \change{$\sigma^u$} (appears in Alg. \ref{opt:geo} line 9 and Alg. \ref{opt:worst} line 6) is used to introduce some randomness to the worst removal and the four newly proposed removal operators, such that we can avoid removing the same customer over and over again. Parameter \change{$\sigma$} is a random number between 0 and 1 and \change{$u$} is a deterministic parameter. 

\change{The \textbf{worst removal} removes \change{$\Phi$} customers with the highest cost savings $\xi_{i}$, where $\xi_{i} = f(s) - f(\tilde{s})$, and $f(s)$ and $f(\tilde{s})$ are the cost of the solutions with customer $i$ and without $i$, respectively. The $f(s)$ and $f(\tilde{s})$ will be updated after we remove one customer from the solution, and the pseudo-code can be seen in Alg. \ref{opt:worst}. \textbf{Worst-Delay Removal} is to remove $\Phi$ customers with the highest delay cost savings and \textbf{Worst-Dist Removal} is to remove $\Phi$ customers with the highest travel distance savings, where the savings are calculated in the similar way as the \textbf{Worst Removal}}. 

\begin{algorithm}[H]
\begin{algorithmic}[1]
\State{\textbf{Function} Worst Removal (\change{$s , u \in \mathbb{R}_+$})}
\State{\change{$count = \Phi$};}
\While{\change{$count > 0$}}
\State{Array: $L$ = all planned requests $i$, sorted by descending \change{$\xi_{i}$};}
\State{choose a random number \change{$\sigma$} from the interval [0,1);}
\State{request: \change{$q = L[\sigma^u|L|]$};}
\State{remove the request $q$ from $s$;}
\State{\change{$count = count - 1$};}
\EndWhile
\end{algorithmic}
\caption{Worst Removal} \label{opt:worst}
\end{algorithm}

\end{document}